\newtheorem{theorem}{Theorem}[section]
\newtheorem{lemma}[theorem]{Lemma}
\newtheorem{corollary}[theorem]{Corollary}
\newtheorem{proposition}[theorem]{Proposition}
\newtheorem*{theoremA}{Theorem A}
\newtheorem*{theoremB}{Theorem B}
\theoremstyle{definition}
\newtheorem{remark}[theorem]{Remark}
\numberwithin{equation}{section}
\newcommand{\D}{\mathbb{D}}
\begin{document}

\title[Adjoints of composition operators]{Adjoints of linear fractional composition operators on weighted Hardy spaces}

\author{{\v Z}eljko {\v C}u{\v c}kovi{\'c}}
\address{Department of Mathematics and Statistics, Mail Stop 942, University of Toledo, Toledo, OH 43606}
\email{zeljko.cuckovic@utoledo.edu}

\author{Trieu Le}
\address{Department of Mathematics and Statistics, Mail Stop 942, University of Toledo, Toledo, OH 43606}
\email{trieu.le2@utoledo.edu}

\subjclass[2010]{Primary 47B33}

\keywords{Composition operator; adjoint; weighted Hardy space}

\begin{abstract} 
It is well known that on the Hardy space $H^2(\mathbb{D})$ or weighted Bergman space $A^2_{\alpha}(\mathbb{D})$ over the unit disk, the adjoint of a linear fractional composition operator equals the product of a composition operator and two Toeplitz operators. On $S^2(\mathbb{D})$, the space of analytic functions on the disk whose first derivatives belong to $H^2(\mathbb{D})$, Heller showed that a similar formula holds modulo the ideal of compact operators. In this paper we investigate what the situation is like on other weighted Hardy spaces.
\end{abstract}

\maketitle

\section{Introduction}\label{S:intro} 

Let $\D$ denote the open unit disk in the complex plane. Let $\varphi:\D\rightarrow\D$ be an analytic map. The composition operator $C_{\varphi}$ is defined by $C_{\varphi}f = f\circ\varphi$, where $f$ is an analytic function on $\D$. Composition operators have been studied extensively on Hilbert spaces of analytic functions such as the Hardy space $H^2$, the weighted Bergman spaces $A^2_{\alpha}$ ($\alpha>-1$) and the Dirichlet space $\mathcal{D}$, just to name few. The reader is referred to the excellent books \cite{CowenCRCP1995} and \cite{Shapiro1993} for more details. Of particular interest was finding the formula for the adjoint $C_{\varphi}^{*}$ on these spaces. Cowen \cite{CowenIEOT1988} found the formula for $C_{\varphi}^{*}$ on $H^2$ for the case $\varphi$ is a linear fractional self-map of $\D$ (we shall call such $C_{\varphi}$ a linear fractional composition operator). Cowen showed that if $\varphi(z) = (az+b)/(cz+d)$ is a linear fractional mapping of $\D$ into itself then
\begin{equation}\label{Eqn:CowenFormula}
C_{\varphi}^{*}=M_gC_{\sigma}M_h^{*},
\end{equation}
where $\sigma(z)=(\bar{a}z-\bar{c})/(-\bar{b}z+\bar{d})$ is the Kre\u{\i}n adjoint of $\varphi$ and $M_g$ and $M_h$ are multiplication operators with symbols $g(z)=(-\bar{b}z+\bar{d})^{-1}$ and $h(z)=cz+d$. Cowen's formula was later extended by Hurst \cite{HurstArchivM1997} to weighted Bergman spaces $A^2_{\alpha}$ with $\alpha>-1$. Such formulas initiated more studies of the adjoint of linear fractional composition operators on different spaces of analytic functions and on $H^2$ for  general rational symbols. See \cite{Gallardo-GutierrezMAnn2003, CowenJFA2006,MartinJFA2006,HammondJMAA2008,BourdonJFA2008} and the references therein.

Recently, Heller \cite{HellerJMAA2012} investigated the adjoint of $C_{\varphi}$ acting on the space $S^2(\D)$, which consists of analytic functions on $\D$ whose first derivative belongs to $H^2$. Let $\mathcal{K}$ denote the ideal of compact operators on $S^2(\D)$. Heller obtained the following results.
\begin{theoremA}
Let $\varphi(z)=az/(cz+d)$ be a holomorphic self-map of the disk and consider $C_{\varphi}$ acting on $S^2(\D)$. Then $$C_{\varphi}^{*} = M_{G}^{*}C_{\sigma} \text{ mod } \mathcal{K},$$
where $G(z)=(-c/a)z+1$ and $\sigma(z)=(\bar{a}/\bar{d})z-\bar{c}/\bar{d}$ is the Kre\u{\i}n adjoint of $\varphi$.
\end{theoremA}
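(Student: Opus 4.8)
The plan is to reduce the assertion to an identity on the ordinary Hardy space $H^{2}$ via the differentiation map. Equip $S^{2}(\D)$ with the inner product $\langle f,g\rangle = f(0)\overline{g(0)}+\langle f',g'\rangle_{H^{2}}$ (we fix this normalization; the statement modulo $\mathcal{K}$ is unaffected by passing to any equivalent $S^{2}$-inner product, since such a change conjugates adjoints by a diagonal operator that is the identity plus a compact). Then
\[
U\colon S^{2}(\D)\longrightarrow\mathbb{C}\oplus H^{2},\qquad Uf=\bigl(f(0),\,f'\bigr),
\]
is unitary, with inverse $(\lambda,g)\mapsto\lambda+\int_{0}^{z}g$. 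So it suffices to realize $C_{\varphi}^{*}$ and $M_{G}^{*}C_{\sigma}$ as $2\times2$ operator matrices over $\mathbb{C}\oplus H^{2}$ and to show their difference is compact. Three facts will be used repeatedly: the Volterra operator $Vg(z)=\int_{0}^{z}g(\zeta)\,d\zeta$ is compact on $H^{2}$ (indeed Hilbert--Schmidt); every bounded linear functional on $H^{2}$ has rank one; and for any analytic self-map $\psi$ of $\D$ one has $(C_{\psi}f)'=\psi'\cdot(f'\circ\psi)=M_{\psi'}C_{\psi}f'$.

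Since $\varphi(z)=az/(cz+d)$ fixes the origin, $(C_{\varphi}f)(0)=f(0)$ and $(C_{\varphi}f)'=M_{\varphi'}C_{\varphi}f'$, with $\varphi'(z)=ad/(cz+d)^{2}\in H^{\infty}$ (a linear fractional self-map of $\D$ has its pole off $\overline{\D}$). Hence $UC_{\varphi}U^{-1}=1\oplus(M_{\varphi'}C_{\varphi})$ is diagonal, so
\[
UC_{\varphi}^{*}U^{-1}=1\oplus\bigl(C_{\varphi}^{*}M_{\varphi'}^{*}\bigr)=1\oplus\bigl(C_{\varphi}^{*}T_{\overline{\varphi'}}\bigr),
\]
where from now on the operators in the second summands act on $H^{2}$ and $T_{u}$ is the Toeplitz operator with symbol $u$. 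Writing $f=\lambda+\int_{0}^{z}g$ and using $G(0)=1$, $G'\equiv-c/a$, $\sigma(0)=-\bar c/\bar d$, $\sigma'\equiv\bar a/\bar d$, the same differentiation bookkeeping yields
\[
UM_{G}U^{-1}=\begin{pmatrix}1 & 0\\ R & M_{G}-(c/a)V\end{pmatrix},\qquad UC_{\sigma}U^{-1}=\begin{pmatrix}1 & L\\ 0 & (\bar a/\bar d)C_{\sigma}\end{pmatrix},
\]
with $R\colon\lambda\mapsto(-c/a)\lambda$ (a constant function) of rank one and $L\colon g\mapsto\int_{0}^{\sigma(0)}g$ a bounded functional. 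As $R,L,V$ are compact, multiplying these matrices gives $U(M_{G}^{*}C_{\sigma})U^{-1}\equiv 1\oplus\bigl(\tfrac{\bar a}{\bar d}T_{\bar G}C_{\sigma}\bigr)$ modulo $\mathcal{K}$, using $M_{G}^{*}=T_{\bar G}$.

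Comparing the two matrices, the theorem reduces to showing $C_{\varphi}^{*}T_{\overline{\varphi'}}-\tfrac{\bar a}{\bar d}T_{\bar G}C_{\sigma}$ is compact on $H^{2}$; I expect it in fact vanishes. By Cowen's formula \eqref{Eqn:CowenFormula} with $b=0$, on $H^{2}$ we have $C_{\varphi}^{*}=\tfrac{1}{\bar d}C_{\sigma}T_{\overline{cz+d}}$; and since $cz+d$ and $\varphi'$ are bounded and analytic, the two co-analytic Toeplitz factors merge, $T_{\overline{cz+d}}T_{\overline{\varphi'}}=T_{\overline{(cz+d)\varphi'}}=T_{\overline{ad/(cz+d)}}$, whence $C_{\varphi}^{*}T_{\overline{\varphi'}}=\bar a\,C_{\sigma}T_{\overline{1/(cz+d)}}$. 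It remains to verify the operator identity $\bar d\,C_{\sigma}T_{\overline{1/(cz+d)}}=T_{\bar G}C_{\sigma}$; taking adjoints, this is $d\,M_{1/(cz+d)}C_{\sigma}^{*}=C_{\sigma}^{*}M_{G}$, which one checks by applying both sides to the reproducing kernels $k_{w}(z)=(1-\bar w z)^{-1}$: using $C_{\sigma}^{*}k_{w}=k_{\sigma(w)}$ and expanding $M_{1/(cz+d)}k_{\sigma(w)}$ and $M_{G}k_{w}$ by partial fractions, the two sides coincide thanks to the relations $c+d\,\overline{\sigma(w)}=a\bar w$ and $k_{\sigma(0)}(z)=d/(cz+d)$, both immediate from $\sigma(z)=(\bar a z-\bar c)/\bar d$ and $G(z)=1-(c/a)z$. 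I expect the real obstacle to be not any single step but the cumulative bookkeeping: assembling the operator matrices for $M_{G}$ and $C_{\sigma}$ correctly, recognizing each error term as of Volterra- or finite-rank type, and carrying through the reproducing-kernel verification of the last $H^{2}$ identity. Once that identity is in hand, $C_{\varphi}^{*}T_{\overline{\varphi'}}=\tfrac{\bar a}{\bar d}T_{\bar G}C_{\sigma}$ exactly, and Theorem~A follows.
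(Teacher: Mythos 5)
Your proposal is correct, but it reaches Theorem A by a genuinely different route than the paper. The paper deduces Theorem A as the $t=1$ case of a much more general statement: it first proves the exact or finite-rank identity $C_{\varphi}^{*}=M_{g}C_{\sigma}M_{h}^{*}$ on every Zhao--Zhu space $A^2_{\alpha}$ by computing against the three explicit forms of the reproducing kernel (Theorems \ref{T:AdjointFormula_a} and \ref{T:AdjointFormula_b}), transfers the formula to any $H^2(\beta)$ with $\beta(n)\sim \ell n^{t}$ via the compact perturbation of the inner product (Proposition \ref{P:compactDifference} and Theorem \ref{T:mainTheorem}), and then, in Corollary \ref{C:improved_TheoremA}, absorbs the constant factor $g$ into $h$ and uses the intertwining $M_{1/G\circ\varphi}C_{\varphi}=C_{\varphi}M_{1/G}$ to move the Toeplitz factor to the left of $C_{\sigma}$. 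You instead work entirely inside $S^2(\D)$, conjugate by the differentiation unitary $Uf=(f(0),f')$ onto $\mathbb{C}\oplus H^2$, discard the rank-one and Volterra-type blocks, and reduce the whole theorem to the single $H^2$ identity $C_{\varphi}^{*}T_{\overline{\varphi'}}=(\bar a/\bar d)T_{\bar G}C_{\sigma}$, which you then verify exactly from Cowen's formula; I checked this last step independently (with $C_{\sigma}^{*}=d\,M_{1/(cz+d)}C_{\varphi}$ and $G\circ\varphi=d/(cz+d)$, both sides of the adjoint identity $d\,M_{1/(cz+d)}C_{\sigma}^{*}=C_{\sigma}^{*}M_{G}$ equal $d^{2}M_{1/(cz+d)^{2}}C_{\varphi}$), and your operator matrices for $C_{\varphi}$, $M_{G}$ and $C_{\sigma}$ are right. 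Your argument is closer in spirit to Heller's original proof: it is more elementary and self-contained for $S^2(\D)$, and it identifies the compact error concretely as finite-rank plus Volterra terms. What it gives up is the paper's generality: a single differentiation ties you to the weight $\beta(n)\approx n$, whereas the $A^2_{\alpha}$ machinery covers all $H^2(\beta)$ with $\beta(n)\sim\ell n^{t}$ for arbitrary real $t$ and hence the exponent $2t-1$ in $G$. One point you should make explicit rather than assert in passing: the claim that switching to an equivalent $S^2$-inner product only perturbs adjoints compactly needs the ratio of the two weight sequences to converge to a positive constant (this is precisely Proposition \ref{P:compactDifference}), not mere norm equivalence; for the normalization you chose this is harmless, since $\|z^{n}\|^{2}=n^{2}$ matches the paper's identification of $S^2(\D)$ with $H^2(\beta)$, $\beta(0)=1$, $\beta(n)=n$.
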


\begin{theoremB}
Let $\varphi(z)=\lambda(z+u)/(1+\bar{u}z)$, $|\lambda|=1$, $|u|<1$, be an automorphism of the disk and consider $C_{\varphi}$ acting on $S^2(\D)$. Then $$C_{\varphi}^{*} = M_{G}^{*}C_{\varphi^{-1}}M_{1/H} \text{ mod } \mathcal{K},$$
where $G(z)=-\overline{\lambda u}\,z+1$ and $H(z)=\bar{u}z+1$.
\end{theoremB}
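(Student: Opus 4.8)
The plan is to transport the problem to the Hardy space $H^{2}$, where Cowen's formula \eqref{Eqn:CowenFormula} is available, using that differentiation sets up a Fredholm correspondence between $S^{2}(\D)$ and $H^{2}$. Write $D\colon S^{2}(\D)\to H^{2}$ for the operator $Df=f'$: it is bounded and surjective, its kernel is the one-dimensional space of constants, and for the natural norm on $S^{2}(\D)$ it is essentially unitary, in the sense that $I-DD^{*}$ and $I-D^{*}D$ are compact. The single analytic ingredient that makes everything work is that the inclusion $\iota\colon S^{2}(\D)\hookrightarrow H^{2}$ is compact, which is immediate from the Taylor-coefficient descriptions of the two spaces. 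Differentiation interacts with the operators at hand through calculus: $DC_{\varphi}=M_{\varphi'}C_{\varphi}D$ exactly (chain rule), and $DM_{\psi}=M_{\psi}D+M_{\psi'}\iota$ (product rule), so that $DM_{\psi}\equiv M_{\psi}D\pmod{\mathcal K}$ whenever $\psi'\in H^{\infty}$.

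From these I would extract a dictionary, valid modulo $\mathcal K$, between operators on $S^{2}(\D)$ and operators on $H^{2}$. Using $DD^{*}\equiv I$ and $D^{*}D\equiv I$: the map $X\mapsto D^{*}XD$ from $\mathcal B(H^{2})$ to $\mathcal B(S^{2}(\D))$ is both multiplicative and injective modulo $\mathcal K$; the multiplication operator $M_{\psi}$ and its adjoint $M_{\psi}^{*}$ on $S^{2}(\D)$ correspond to $D^{*}M_{\psi}D$ and $D^{*}M_{\psi}^{*}D$, i.e. to the analytic and co-analytic Toeplitz operators on $H^{2}$; and $C_{\varphi}$ on $S^{2}(\D)$ corresponds to $D^{*}M_{\varphi'}C_{\varphi}D$, so that, taking adjoints, $C_{\varphi}^{*}\equiv D^{*}C_{\varphi}^{*}M_{\varphi'}^{*}D\pmod{\mathcal K}$ with $C_{\varphi}^{*}$ on the right the adjoint on $H^{2}$. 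The finitely many rank-one corrections coming from $\ker D$ and from $\iota$ are harmless.

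Finally I would run Cowen's formula through the dictionary. Since $\varphi$ is an automorphism, the Kre\u{\i}n adjoint $\sigma$ equals $\varphi^{-1}$, and on $H^{2}$ we have $C_{\varphi}^{*}=M_{1/G}C_{\varphi^{-1}}M_{H}^{*}$ with $G,H$ as in the statement; therefore on $S^{2}(\D)$
\[
C_{\varphi}^{*}\equiv D^{*}\,M_{1/G}\,C_{\varphi^{-1}}\,M_{H}^{*}M_{\varphi'}^{*}\,D
=D^{*}\,M_{1/G}\,C_{\varphi^{-1}}\,M_{\varphi'H}^{*}\,D \pmod{\mathcal K}.
\]
The core of the argument is then elementary but delicate bookkeeping of the rational symbols: one computes $\varphi'(z)=\lambda(1-|u|^{2})/H(z)^{2}$, whence $\varphi'H=\lambda(1-|u|^{2})/H$, and $\lambda-\bar u z=\lambda\,G(z)$, which governs $(\varphi^{-1})'$ and its product with $1/G$; when $C_{\varphi^{-1}}$ is carried back to $S^{2}(\D)$ it reintroduces a Jacobian factor $(\varphi^{-1})'$, and all of these combine so that the scalar factors cancel (here $|\lambda|=1$) and what remains is a composition operator by $\varphi^{-1}$ flanked by two multiplication-type operators with symbols built from $G$ and $1/H$; identifying this with $M_{G}^{*}C_{\varphi^{-1}}M_{1/H}$ completes the proof. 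The step I expect to be the real obstacle is exactly this last matching: reconciling the co-analytic Toeplitz operators that Cowen's formula forces upon us with the multiplication operators that appear in the target formula, keeping precise track of which factors carry adjoints and of all conjugations — together with the routine but unavoidable task of checking that every operator discarded along the way genuinely lands in $\mathcal K$, which rests entirely on $S^{2}(\D)$ being indistinguishable from $H^{2}$ modulo compacts.
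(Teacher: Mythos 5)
Your transport scheme via the differentiation operator $D\colon S^2(\D)\to H^2$ is sound and all of the intermediate identities check out: with $\beta(0)=1$, $\beta(n)=n$, the operator $D$ is a co-isometry with one-dimensional kernel, the inclusion $\iota$ is compact, and the chain- and product-rule intertwinings are exact up to the compact terms you identify. Carrying Cowen's exact formula $C_{\varphi}^{*}=M_{1/G}C_{\varphi^{-1}}M_{H}^{*}$ on $H^2$ back through your dictionary, with $\varphi'=\lambda(1-|u|^2)/H^2$ and $(\varphi^{-1})'=\bar\lambda(1-|u|^2)/G^2$, the scalars do cancel and you land (mod $\mathcal{K}$) at
\[
C_{\varphi}^{*}\equiv M_{G}\,C_{\varphi^{-1}}\,M_{1/H}^{*},
\]
i.e.\ the \emph{Cowen form}, with the plain multiplication on the left and the adjoint on the right. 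This is where the proposal stops short: the statement to be proved is $C_{\varphi}^{*}\equiv M_{G}^{*}C_{\varphi^{-1}}M_{1/H}$, with the adjoint moved to the other factor. Passing between these two expressions is not the ``bookkeeping of rational symbols'' you describe; no amount of symbol manipulation will turn an analytic Toeplitz operator into a co-analytic one modulo compacts ($M_{G}$ and $M_{G}^{*}$ are certainly not congruent mod $\mathcal{K}$ individually). The step you flag as ``the real obstacle'' is in fact a missing argument, and it is precisely the content that distinguishes Heller's formula from Cowen's.

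The missing ingredient is an algebraic one that exploits the automorphism hypothesis, and it is how the paper closes the same gap (Corollary \ref{C:improved_TheoremB}): from $C_{\varphi}^{*}\equiv M_{G}C_{\varphi^{-1}}M_{1/H}^{*}$ take adjoints to get $C_{\varphi}\equiv M_{1/H}C_{\varphi^{-1}}^{*}M_{G}^{*}$, solve for $C_{\varphi^{-1}}^{*}\equiv M_{H}C_{\varphi}M_{1/G}^{*}$, and then invert in the Calkin algebra, using that $C_{\varphi^{-1}}^{*}$ is invertible with inverse $C_{\varphi}^{*}$ and that $M_{G}$, $M_{H}$ are invertible multiplications (their symbols are zero-free on $\overline{\D}$); this yields $C_{\varphi}^{*}\equiv M_{G}^{*}C_{\varphi^{-1}}M_{1/H}$. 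If you append this inversion step, your proof is complete, and it is then a genuinely different route from the paper's: you go through $H^2$ via differentiation (essentially Heller's original strategy, confined to $S^2(\D)$), whereas the paper first establishes the Cowen-form identity on every $A^2_{\alpha}$, $\alpha\in\mathbb{R}$, from explicit reproducing kernels and then transfers it to arbitrary $H^2(\beta)$ with $\beta(n)\sim\ell n^{t}$ by a compact perturbation of the inner product (Proposition \ref{P:compactDifference}), which buys the result on a much wider class of spaces at the cost of the kernel computations.
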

For a general linear fractional self-map $\varphi$, a formula for $C_{\varphi}^{*}$ modulo the compact operators can be obtained by combining the above two results. Certain simplification of the above formulas was also presented in \cite{HellerJMAA2012}. It is curious to us that Heller's formulas are not of the same form as Cowen's formula \eqref{Eqn:CowenFormula}: the order of the multiplication operators is different. 
The purpose of the paper is to investigate the adjoints of linear fractional composition operators in a more general setting. We then explain how to recover Heller's formulas from our results.

All of the spaces mentioned above belong to the class of weighted Hardy spaces $H^2(\beta)$, where $\beta=\{\beta(n)\}_{n\geq 1}^{\infty}$ is a sequence of positive numbers. These spaces are Hilbert spaces of analytic functions on the unit disk in which the monomials $\{z^n:n\geq 0\}$ form an orthogonal basis with $\|z^n\|=\beta(n)$. We shall show that it is possible to obtain Cowen's formula modulo compact operators not only on $S^2(\D)$ but also on a wide subclass of weighted Hardy spaces $H^2(\beta)$. Our strategy involves the family of weighted Bergman spaces $A^2_{\alpha}$ ($\alpha\in\mathbb{R}$) studied by Zhao and Zhu \cite{ZhaoMSMF2008}. We use the exact formulas for the reproducing kernels of $A^2_{\alpha}$ to obtain Cowen's type formula for $C_{\varphi}^{*}$ on these spaces first. We then extend our formulas to $H^2(\beta)$ for appropriate weight sequences whose term $\beta_n$ behaves asymptotically as $\|z^n\|_{\alpha}$.

\section{Adjoint formulas on $A^2_{\alpha}$}

In this section we study the adjoint of composition operators acting on weighted Bergman spaces $A^2_{\alpha}$ for $a\in\mathbb{R}$. The standard weighted Bergman spaces are defined for measures $dA_{\alpha}(z) = (1-|z|^2)^{\alpha}dA(z)$ with $\alpha>-1$. Zhao and Zhu \cite{ZhaoMSMF2008} extended this definition to the case where $\alpha$ is any real number. For any $\alpha\in\mathbb{R}$, the space $A^2_{\alpha}$ consists of holomorphic functions $f$ on $\D$ with the property that there exists an integer $k\geq 0$ with $\alpha+2k>-1$ such that $(1-|z|^2)^{k}f^{(k)}(z)$ belongs to $L^2(\D,dA_{\alpha})$, or equivalently, $f^{(k)}$ belongs to $A^2_{\alpha+2k}$. It is well know that this definition is consistent with the traditional definition for $\alpha>-1$. The reader is referred to \cite{ZhaoMSMF2008} for a detailed study of $A^2_{\alpha}$. Note that any function that is analytic on an open neighborhood of the closed unit disk belongs to $A^2_{\alpha}$ for all $\alpha$.

In \cite[Section 11]{ZhaoMSMF2008}, it was shown that each $A^2_{\alpha}$ is a reproducing kernel Hilbert space. When equipped with an appropriate inner product, the kernel of $A^2_{\alpha}$ can be computed explicitly. Depending on the value of $\alpha$, we obtain three types of kernels. For each type, we show that the operator $$C_{\varphi}^{*}-M_gC_{\sigma}M^{*}_{h}$$ is either zero or has finite rank, where $g$ and $h$ are certain analytic functions associated with $\varphi$.

For $\alpha+2>0$, the kernel is
\begin{align}
\label{Eqn:Kernel_typeA}
K_{\alpha}(z,w) = \dfrac{1}{(1-z\bar{w})^{\alpha+2}},
\end{align}
and $$\|z^m\|_{\alpha}=\sqrt{\frac{m!\,\Gamma(\alpha+2)}{\Gamma(m+\alpha+2)}},\quad m=0,1,2\ldots,$$ which behaves asymptotically as $m^{-(\alpha+1)/2}$ by Stirling's formula.

When $\alpha+2$ is negative and non-integer such that $-N<\alpha+2<-N+1$ for some positive integer $N$, the kernel takes the form
\begin{align}
\label{Eqn:Kernel_typeB}
K_{\alpha}(z,w) = \dfrac{(-1)^{N}}{(1-z\bar{w})^{\alpha+2}} +Q(z\bar{w}),
\end{align}
where $Q$ is an analytic polynomial of degree $N$. In this case, for $m>N$,
$$\|z^m\|_{\alpha} = \sqrt{(-1)^N\dfrac{m!\, \Gamma(\alpha+2)}{\Gamma(m+\alpha+2)}},$$ which also behaves asymptotically as $m^{-(\alpha+1)/2}$.

In the case $\alpha+2 = -N$, where $N$ is a non-negative integer, the kernel has the form
\begin{align}
\label{Eqn:Kernel_typeC}
K_{\alpha}(z,w)=\big(\bar{w}z-1\big)^{N}\log\big(\dfrac{1}{1-\bar{w}z}\big)+Q(\bar{w}z),
\end{align}
where $Q$ is an analytic polynomial of degree $N$. For $m>N$, we have
$$\|z^m\|_{\alpha} = \sqrt{\dfrac{1}{A_m}},$$
where $A_m$ is the coefficient of $z^m$ in the Taylor expansion
\begin{align*}
(z-1)^N\log\dfrac{1}{1-z} & = \sum_{k=0}^{\infty}A_k z^k.
\end{align*}
The argument in the paragraph preceding \cite[Theorem 44]{ZhaoMSMF2008} shows that $\|z^m\|_{\alpha}$ behaves asymptotically as $m^{(N+1)/2} = m^{-(\alpha+1)/2}$ as well.

\begin{remark}
\label{R:asymptoticBehavior}
For any real number $\alpha$, we see that $\|z^m\|_{\alpha}$ behaves asymptotically as $m^{-(\alpha+1)/2}$ when $m\rightarrow\infty$.
\end{remark}

For the Hardy and weighted Bergman spaces (which may be identified as $A^2_{\alpha}$ with $\alpha\geq -1$), it is well known that their multiplier spaces are exactly the same as $H^{\infty}(\D)$ and any composition operator induced by a holomorphic self-map of $\D$ is bounded. However, such results do not hold for other values of $\alpha$. On the other hand, it turns out, as we shall show below, that all multiplication and composition operators discussed in this paper are bounded on all $A^2_{\alpha}$. 

For two positive quantities $A$ and $B$, we write $A\lesssim B$ if there exists a constant $c>0$ independent of the variables under consideration such that $A\leq cB$. We write $A\approx B$ if $A\lesssim B$ and $B\lesssim A$.

Let $m\geq 0$ be an integer. Recall \cite[Theorem 13]{ZhaoMSMF2008} that for any real number $\alpha$, a function $f$ belongs to $A^2_{\alpha}$ if and only if the $m$th derivative $f^{(m)}$ belongs to $A^2_{\alpha+2m}$ and 
\begin{equation}
\label{Eqn:equivNorm}
\|f\|_{\alpha}\approx \|f^{(m)}\|_{\alpha+2m}.
\end{equation}
Also, if $\alpha_1<\alpha_2$ then 
\begin{equation}
\label{Eqn:inequalNorm}
\|\cdot\|_{\alpha_2}\lesssim\|\cdot\|_{\alpha_1}.
\end{equation}

\begin{lemma}
\label{L:multipliers_A2_alpha}
Let $\alpha$ be a real number and $m$ be a positive integer such that $\alpha+2m> -1$. There exists a positive constant $C$ such that if $u$ is a function holomorphic on an open neighborhood of the closed unit disk, then $M_{u}$ is a bounded operator on $A^2_{\alpha}$ and
\begin{align}
\label{Eqn:multipliers_A2_alpha}
 \|M_{u}\|\leq C\max\{\|u^{(j)}\|_{L^{\infty}(\D)}: 0\leq j\leq m\}.
\end{align}
\end{lemma}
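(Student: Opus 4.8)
The plan is to reduce the boundedness of $M_u$ on $A^2_\alpha$ to the boundedness of a multiplication operator on a \emph{traditional} weighted Bergman space $A^2_{\alpha+2m}$, where the classical theory ($H^\infty$ multipliers act boundedly) applies. By hypothesis $\alpha+2m>-1$, so by the equivalence of norms \eqref{Eqn:equivNorm} it suffices to control $\|(uf)^{(m)}\|_{\alpha+2m}$ by $\|f^{(m)}\|_{\alpha+2m}$ (up to the constant in \eqref{Eqn:equivNorm} and a constant depending on the listed sup-norms of $u$ and its derivatives). The natural tool is the Leibniz rule:
\begin{equation*}
(uf)^{(m)} = \sum_{j=0}^{m}\binom{m}{j}\,u^{(j)}\,f^{(m-j)}.
\end{equation*}
So I would bound $\|(uf)^{(m)}\|_{\alpha+2m}$ by $\sum_{j=0}^m \binom{m}{j}\,\|u^{(j)}\,f^{(m-j)}\|_{\alpha+2m}$ and handle each summand separately.

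For each term, $u^{(j)}$ is holomorphic on a neighborhood of $\overline{\D}$, hence bounded, so it is a multiplier on the traditional space $A^2_{\alpha+2m}$ with operator norm controlled by $\|u^{(j)}\|_{L^\infty(\D)}$; thus $\|u^{(j)} f^{(m-j)}\|_{\alpha+2m}\lesssim \|u^{(j)}\|_{L^\infty(\D)}\,\|f^{(m-j)}\|_{\alpha+2m}$. It remains to dominate $\|f^{(m-j)}\|_{\alpha+2m}$ by $\|f^{(m)}\|_{\alpha+2m}$ for $0\le j\le m$. Since $m-j\le m$, one way is to iterate the norm inequality \eqref{Eqn:inequalNorm}: by \eqref{Eqn:equivNorm} applied with $m-j$ derivatives, $\|f^{(m-j)}\|_{\alpha+2m}\approx\|(f^{(m-j)})^{(j)}\|_{\alpha+2m+2j}=\|f^{(m)}\|_{\alpha+2m+2j}\lesssim\|f^{(m)}\|_{\alpha+2m}$, where the last step uses \eqref{Eqn:inequalNorm} and $\alpha+2m+2j\ge \alpha+2m$. (Strictly, \eqref{Eqn:equivNorm} as stated requires the base weight exponent to be arbitrary real but the derivative count positive; for $j=0$ the statement is trivial, and for $j\ge 1$ we apply it with base exponent $\alpha+2m>-1$, which is legitimate.) Collecting the $O(1)$ many terms and using $\|f^{(m)}\|_{\alpha+2m}\approx\|f\|_\alpha$ again gives $\|M_u f\|_\alpha\lesssim \max_{0\le j\le m}\|u^{(j)}\|_{L^\infty(\D)}\,\|f\|_\alpha$, which is \eqref{Eqn:multipliers_A2_alpha} with a constant $C$ depending only on $\alpha$ and $m$.

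The one point requiring care — and the main obstacle — is keeping the constant $C$ \emph{independent of $u$}: all the implied constants coming from \eqref{Eqn:equivNorm}, \eqref{Eqn:inequalNorm}, the binomial coefficients, and the classical $H^\infty$-multiplier bound on $A^2_{\alpha+2m}$ must be seen to depend only on $\alpha$ and $m$, not on $u$. The Leibniz expansion is what makes this transparent, since it isolates the dependence on $u$ into the explicit factors $\|u^{(j)}\|_{L^\infty(\D)}$. A secondary subtlety is that $M_u$ must first be shown to map $A^2_\alpha$ into itself at all (so that ``bounded operator'' is meaningful); but this follows from the same Leibniz computation, since it shows $(uf)^{(m)}\in A^2_{\alpha+2m}$ whenever $f^{(m)}\in A^2_{\alpha+2m}$, and then \eqref{Eqn:equivNorm} places $uf$ in $A^2_\alpha$.
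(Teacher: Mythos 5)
Your proof is correct and follows essentially the same route as the paper's: apply the Leibniz rule to $(uf)^{(m)}$, use the pointwise $L^\infty$ bound for each $u^{(j)}$ on the traditional space $A^2_{\alpha+2m}$, and control the intermediate derivative norms via \eqref{Eqn:equivNorm} and \eqref{Eqn:inequalNorm}. The only (immaterial) difference is that the paper bounds $\|f^{(j)}\|_{\alpha+2m}\approx\|f\|_{\alpha+2m-2j}\lesssim\|f\|_{\alpha}$ directly, whereas you pass through $\|f^{(m)}\|_{\alpha+2m+2j}$ first.
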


\begin{proof}
To simplify the notation, we put $$\|u\|_{m,\infty} = \max\{\|u^{(j)}\|_{L^{\infty}(\D)}: 0\leq j\leq m\}.$$
For any $f\in A^2_{\alpha}$, using \eqref{Eqn:equivNorm}, we compute
\begin{align*}
\|u f\|_{\alpha} \approx \|(u f)^{(m)}\|_{\alpha+2m} & = \Big\|\sum_{j=0}^{m}\binom{m}{j}u^{(m-j)}f^{(j)}\Big\|_{\alpha+2m}\\
& \leq \sum_{j=0}^{m}\binom{m}{j}\|u^{(m-j)}\|_{L^{\infty}(\D)}\|f^{(j)}\|_{\alpha+2m}\\
& \leq \|u\|_{m,\infty}\sum_{j=0}^{m}\binom{m}{j}\|f^{(j)}\|_{\alpha+2m}.
\end{align*}
Moreover, for any $0\leq j\leq m$, by \eqref{Eqn:equivNorm} and \eqref{Eqn:inequalNorm}, we have
\begin{align*}
\|f^{(j)}\|_{\alpha+2m} \approx \|f\|_{\alpha+2m-2j} \lesssim \|f\|_{\alpha}.
\end{align*} 
Consequently,
\begin{align*}
\|u f\|_{\alpha} & \lesssim \|u\|_{m,\infty}\|f\|_{\alpha}\sum_{j=0}^{m}\binom{m}{j} = 2^{m}\|u\|_{m,\infty}\|f\|_{\alpha}.
\end{align*}
This implies \eqref{Eqn:multipliers_A2_alpha} with a constant $C$ independent of $u$. 
\end{proof}

\begin{lemma}
\label{L:boundedCOs}
Let $\varphi$ be a holomorphic self-map of $\D$ such that $\varphi$ extends to a holomorphic function on an open neighborhood of the closed unit disk. Then $C_{\varphi}$ is a bounded operator on $A^2_{\alpha}$ for any real number $\alpha$.
\end{lemma}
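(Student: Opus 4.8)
The plan is to reduce the boundedness of $C_{\varphi}$ on $A^2_{\alpha}$ to the already-established case $\alpha > -1$ by using the derivative-norm equivalence \eqref{Eqn:equivNorm} together with Lemma~\ref{L:multipliers_A2_alpha}. Fix $\alpha \in \mathbb{R}$ and choose an integer $m \geq 0$ with $\alpha + 2m > -1$. For $f \in A^2_{\alpha}$ we have $\|C_{\varphi} f\|_{\alpha} = \|f\circ\varphi\|_{\alpha} \approx \|(f\circ\varphi)^{(m)}\|_{\alpha+2m}$ by \eqref{Eqn:equivNorm}. The idea is to expand $(f\circ\varphi)^{(m)}$ via the Fa\`a di Bruno formula (or simply by induction on $m$): it is a finite sum of terms of the shape $(f^{(k)}\circ\varphi)\cdot P_k$, where $1 \leq k \leq m$ and each $P_k$ is a polynomial in $\varphi', \varphi'', \dots, \varphi^{(m)}$. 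Since $\varphi$ extends holomorphically to a neighborhood of $\overline{\D}$, every such $P_k$ is holomorphic on a neighborhood of $\overline{\D}$, hence bounded there, so by Lemma~\ref{L:multipliers_A2_alpha} multiplication by $P_k$ is a bounded operator on $A^2_{\alpha+2m}$.

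The main point is then to bound each $\|(f^{(k)}\circ\varphi)\,P_k\|_{\alpha+2m}$. Using boundedness of $M_{P_k}$ on $A^2_{\alpha+2m}$, it suffices to bound $\|f^{(k)}\circ\varphi\|_{\alpha+2m} = \|C_{\varphi}(f^{(k)})\|_{\alpha+2m}$. Now $\alpha+2m > -1$, so $A^2_{\alpha+2m}$ is a classical weighted Bergman space on which every composition operator induced by a holomorphic self-map of $\D$ is bounded (this is the ``well known'' fact recalled in the paragraph preceding Lemma~\ref{L:multipliers_A2_alpha}); hence $\|C_{\varphi}(f^{(k)})\|_{\alpha+2m} \lesssim \|f^{(k)}\|_{\alpha+2m}$. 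Finally, $\|f^{(k)}\|_{\alpha+2m} \approx \|f\|_{\alpha+2m-2k} \lesssim \|f\|_{\alpha}$ by \eqref{Eqn:equivNorm} and \eqref{Eqn:inequalNorm} (here $k \geq 1$ so $\alpha+2m-2k \geq \alpha$, possibly with equality when $k=m$, in which case \eqref{Eqn:inequalNorm} is not needed). Summing the finitely many terms gives $\|C_{\varphi}f\|_{\alpha} \lesssim \|f\|_{\alpha}$, which is the assertion.

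The step I expect to require the most care is the bookkeeping in the chain rule expansion: one must verify that every term in $(f\circ\varphi)^{(m)}$ genuinely involves a derivative $f^{(k)}$ with $k \geq 1$ (so that we may invoke \eqref{Eqn:inequalNorm}/\eqref{Eqn:equivNorm} to return to $\|f\|_{\alpha}$, rather than being stuck with $\|f\|_{\alpha+2m}$, which is the wrong direction of the inequality), and that the coefficient polynomials $P_k$ involve only derivatives of $\varphi$ up to order $m$, so that Lemma~\ref{L:multipliers_A2_alpha} applies with a single fixed $m$. Both are immediate from the standard form of Fa\`a di Bruno's formula, but they are the crux of why the argument closes. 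Everything else is a routine concatenation of the estimates \eqref{Eqn:equivNorm}, \eqref{Eqn:inequalNorm}, Lemma~\ref{L:multipliers_A2_alpha}, and the classical $\alpha > -1$ theory.
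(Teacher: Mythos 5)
Your argument is correct, and it uses the same basic ingredients as the paper (the derivative characterization \eqref{Eqn:equivNorm}, the embedding \eqref{Eqn:inequalNorm}, Lemma~\ref{L:multipliers_A2_alpha}, and the classical boundedness of $C_{\varphi}$ on $A^2_{\gamma}$ for $\gamma>-1$), but it assembles them differently. The paper proceeds by induction on $k$ for spaces $A^2_{-2k+\gamma}$ with $\gamma>-1$ fixed: at each step it differentiates only once, writing $(h\circ\varphi)'=(h'\circ\varphi)\varphi'$, and then invokes the closedness of $C_{\varphi}$ (a closed-graph argument) to upgrade the set-theoretic inclusion $C_{\varphi}(A^2_{-2k-2+\gamma})\subseteq A^2_{-2k-2+\gamma}$ to boundedness. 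You instead differentiate $m$ times in one shot, which forces the Fa\`a di Bruno bookkeeping but yields an explicit norm estimate $\|C_{\varphi}f\|_{\alpha}\lesssim\|f\|_{\alpha}$ without appealing to the closed graph theorem. Your observation that every term of $(f\circ\varphi)^{(m)}$ carries a factor $f^{(k)}\circ\varphi$ with $k\geq 1$, so that one lands back at $\|f\|_{\alpha+2m-2k}\lesssim\|f\|_{\alpha}$ rather than at the useless $\|f\|_{\alpha+2m}$, is indeed the crux, and it is exactly the point the paper's one-derivative-at-a-time induction sidesteps. One caveat applies equally to both proofs: \eqref{Eqn:equivNorm} as literally stated must be supplemented by finitely many point-evaluation terms (e.g.\ $|(f\circ\varphi)^{(j)}(0)|$ for $j<m$) to be a genuine norm equivalence, but these are bounded functionals on $A^2_{\alpha}$ and cause no difficulty; the paper glosses over this in the same way in the proof of Lemma~\ref{L:multipliers_A2_alpha}. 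In short, your route is more quantitative and self-contained on the functional-analytic side, at the cost of heavier combinatorics in the chain rule; the paper's induction is lighter on computation but leans on the closed graph theorem.
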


\begin{proof}
Fix any real number $\gamma>-1$. We shall prove that $C_{\varphi}$ is bounded on $A^2_{-2k+\gamma}$ for all integers $k\geq 0$ by induction on $k$. This immediately yields the conclusion of the lemma.

Since $\gamma>-1$, $A^2_{\gamma}$ is the weighted Bergman space with weight $(1-|z|^2)^{\gamma}$. It is well known that $C_{\varphi}$ is bounded on $A^2_{\gamma}$, which proves our claim for the case $k=0$. Now assume that $C_{\varphi}$ is bounded on $A^2_{-2k+\gamma}$ for some integer $k\geq 0$. We would like to show that $C_{\varphi}$ is bounded on $A^2_{-2k-2+\gamma}$. Since $C_{\varphi}$ is a closed operator, it suffices to show that for any $h$ in $A^2_{-2k-2+\gamma}$, the composition $h\circ\varphi$ belongs to $A^2_{-2k-2+\gamma}$ as well. This, in turn, is equivalent to the requirement that $(h\circ\varphi)'$ belongs to $A^2_{-2k+\gamma}$. We have $(h\circ\varphi)' = (h'\circ\varphi)\cdot\varphi'$. Since $h$ is in $A^2_{-2k-2+\gamma}$, the  derivative $h'$ belongs to $A^2_{-2k+\gamma}$. By the induction hypothesis, $h'\circ\varphi = C_{\varphi}h'$ belongs to $A^2_{-2k+\gamma}$ as well. On the other hand, by our assumption about $\varphi$, Lemma \ref{L:multipliers_A2_alpha} shows that multiplication by $\varphi'$ is a bounded operator on $A^2_{-2k+\gamma}$. Consequently, $(h'\circ\varphi)\cdot\varphi'$ is an element of $A^2_{-2k+\gamma}$, which is what we wish to show.
\end{proof}

As in Heller's work, our adjoint formula for $C_{\varphi}$ holds modulo finite rank or compact operators. We first recall a description of finite rank operators on Hilbert spaces, see, for example, \cite[Exercise II.4.8]{ConwaySpringer1990}.

Let $\mathcal{H}$ be a Hilbert space. For non-zero vectors $u,v\in\mathcal{H}$, we use $u\otimes v$ to denote the rank one operator $(u\otimes v)(h) = \langle h,v\rangle u$ for $h\in\mathcal{H}$. 

\begin{lemma}
\label{L:finiteRankHilbert}
A bounded linear operator $A:\mathcal{H}\rightarrow\mathcal{H}$ has rank at most $m$ if and only if there exist $f_1,\ldots, f_m$ and $g_1,\ldots, g_m$ belonging to $\mathcal{H}$ such that
$$A = f_1\otimes g_1 + \cdots + f_m\otimes g_m.$$
\end{lemma}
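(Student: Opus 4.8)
The statement to prove is the standard characterization of finite-rank operators on a Hilbert space, so the plan is elementary and the ``main obstacle'' is really just organizing the two implications cleanly.

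\medskip

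\noindent\textbf{Proof plan.} The plan is to prove both directions separately. For the ``if'' direction, suppose $A = f_1\otimes g_1 + \cdots + f_m\otimes g_m$. Each summand $f_j\otimes g_j$ has range contained in the one-dimensional subspace $\mathbb{C}f_j$, so the range of $A$ is contained in $\operatorname{span}\{f_1,\ldots,f_m\}$, which has dimension at most $m$; hence $\operatorname{rank}(A)\le m$. (Boundedness of each $f_j\otimes g_j$, with norm $\|f_j\|\,\|g_j\|$, follows immediately from the Cauchy--Schwarz inequality, so $A$ is bounded.) For the ``only if'' direction, assume $A$ is bounded with $\operatorname{rank}(A)=r\le m$. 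Let $f_1,\ldots,f_r$ be an orthonormal basis of the finite-dimensional subspace $\operatorname{ran}(A)\subseteq\mathcal H$. Then for every $h\in\mathcal H$ we may expand $Ah = \sum_{j=1}^{r}\langle Ah, f_j\rangle f_j$. Setting $g_j := A^{*}f_j\in\mathcal H$, we have $\langle Ah, f_j\rangle = \langle h, A^{*}f_j\rangle = \langle h, g_j\rangle$, so $Ah = \sum_{j=1}^{r}\langle h, g_j\rangle f_j = \big(\sum_{j=1}^{r} f_j\otimes g_j\big)(h)$. Thus $A = f_1\otimes g_1 + \cdots + f_r\otimes g_r$; if $r<m$ one may pad the sum with zero terms (or simply note the statement only claims existence of $m$ vectors when the rank is at most $m$), giving the desired representation.

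\medskip

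\noindent The only mild subtlety is that in the ``only if'' direction one wants the $f_j$ and $g_j$ to be genuine vectors of $\mathcal H$ rather than formal symbols: this is handled by taking an orthonormal basis of the (closed, since finite-dimensional) range of $A$ and invoking the bounded adjoint $A^{*}$ to produce the $g_j$'s. Since this is a textbook fact (cf.\ \cite[Exercise II.4.8]{ConwaySpringer1990}), in the paper it would suffice to cite the reference and, if desired, include the short argument above; there is no real obstacle.
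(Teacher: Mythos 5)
Your proof is correct: the ``if'' direction via the range being contained in $\operatorname{span}\{f_1,\ldots,f_m\}$, and the ``only if'' direction via an orthonormal basis of the (finite-dimensional, hence closed) range together with $g_j=A^{*}f_j$, is exactly the standard argument. The paper itself offers no proof of this lemma --- it simply cites \cite[Exercise II.4.8]{ConwaySpringer1990} --- so there is nothing to compare against; your write-up supplies the textbook argument the authors chose to omit, and the only cosmetic point is that the paper's notation $u\otimes v$ is introduced for non-zero vectors, so when $r<m$ it is cleaner to observe (as you do parenthetically) that the statement only asserts rank at most $m$ rather than literally padding with zero terms.
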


When $\mathcal{H}$ is a reproducing kernel Hilbert space of analytic function, Lemma \ref{L:finiteRankHilbert} takes a different form which will be useful for us. This result is probably well known but we provide a proof for the reason of completeness.

\begin{lemma}
\label{L:finiteRankOperators}
Let $\mathcal{H}$ be a Hilbert space of analytic functions on the unit disk with reproducing kernel $K$. Let $\mathcal{X}$ be the set of functions on $\mathbb{D}\times\mathbb{D}$ of the form
\begin{equation}
\label{Eqn:quasi_Functions}
f_1(z)\overline{g_1(w)}+\cdots+f_m(z)\overline{g_m(w)},
\end{equation}
where $f_1,\ldots, f_m$ and $g_1,\ldots, g_m$ belong to $\mathcal{H}$ and $m$ is a positive integer. Then a bounded linear operator $A:\mathcal{H}\longrightarrow\mathcal{H}$ has finite rank if and only if the function $(z,w)\mapsto\langle AK_w,K_z\rangle$ belongs to $\mathcal{X}$. Here $K_{w}(z) = K(z,w)$ for $z,w\in\D$.
\end{lemma}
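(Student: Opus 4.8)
The plan is to prove both implications by relating the rank-one building blocks $u\otimes v$ from Lemma~\ref{L:finiteRankHilbert} to the product functions $f(z)\overline{g(w)}$ appearing in \eqref{Eqn:quasi_Functions}. The key computational observation is that for any bounded operator $A$ and any $z,w\in\D$, reproducing the value gives
\begin{align*}
\langle AK_w,K_z\rangle = (AK_w)(z),
\end{align*}
so the function $(z,w)\mapsto\langle AK_w,K_z\rangle$ is simply $(z,w)\mapsto (AK_w)(z)$. In particular, for a rank-one operator $A = u\otimes v$ we compute $(AK_w)(z) = \langle K_w,v\rangle\, u(z) = \overline{\langle v,K_w\rangle}\, u(z) = u(z)\overline{v(w)}$. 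Hence, if $A$ has finite rank $m$, Lemma~\ref{L:finiteRankHilbert} lets us write $A = \sum_{j=1}^m f_j\otimes g_j$, and by linearity $\langle AK_w,K_z\rangle = \sum_{j=1}^m f_j(z)\overline{g_j(w)}$, which is exactly of the form \eqref{Eqn:quasi_Functions}; this gives the ``only if'' direction immediately.

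For the converse, suppose the function $(z,w)\mapsto\langle AK_w,K_z\rangle$ equals $\sum_{j=1}^m f_j(z)\overline{g_j(w)}$ for some $f_j,g_j\in\mathcal{H}$. Then for every $w\in\D$ the analytic function $AK_w$ agrees at every point $z\in\D$ with $\sum_{j=1}^m \overline{g_j(w)}\,f_j(z)$, so in fact $AK_w = \sum_{j=1}^m \overline{g_j(w)}\, f_j = \big(\sum_{j=1}^m f_j\otimes g_j\big)K_w$ as elements of $\mathcal{H}$. Thus $A$ and $B := \sum_{j=1}^m f_j\otimes g_j$ agree on the set $\{K_w : w\in\D\}$. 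Since the linear span of the reproducing kernels is dense in $\mathcal{H}$ (any vector orthogonal to all $K_w$ vanishes identically) and both $A$ and $B$ are bounded, we conclude $A = B$, which is an operator of rank at most $m$.

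The only point requiring a little care — and the one I expect to be the main obstacle, though it is minor — is the step passing from pointwise equality of the analytic functions $z\mapsto (AK_w)(z)$ and $z\mapsto\sum_j \overline{g_j(w)}f_j(z)$ to equality as vectors in $\mathcal{H}$. This is legitimate precisely because $\mathcal{H}$ is a space of honest functions on $\D$ (not equivalence classes), so two elements of $\mathcal{H}$ that take the same value at every point of $\D$ are literally the same element; point evaluation is injective on $\mathcal{H}$. With that observation in hand the argument is a short chain of routine identifications, and no estimates are needed.
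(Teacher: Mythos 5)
Your proof is correct and takes essentially the same route as the paper's: reduce to Lemma~\ref{L:finiteRankHilbert}, compute $\langle (f_j\otimes g_j)K_w,K_z\rangle = f_j(z)\overline{g_j(w)}$, and invoke the density of the linear span of $\{K_w : w\in\D\}$. You merely spell out the converse direction (pointwise equality of $AK_w$ with $\sum_j \overline{g_j(w)}f_j$ implying equality in $\mathcal{H}$) in more detail than the paper, which leaves that step implicit.
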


\begin{proof}
By Lemma \ref{L:finiteRankHilbert}, a bounded linear operator $A:\mathcal{H}\longrightarrow\mathcal{H}$ has finite rank if and only if there exist a positive integer $m$ and functions $f_1,\ldots, f_m$ and $g_1,\ldots, g_m$ belonging to $\mathcal{H}$ such that
$$A = f_1\otimes g_1 + \cdots + f_m\otimes g_m.$$
For $1\leq i,j\leq m$, we have
$$\langle (f_j\otimes g_j)K_w,K_z\rangle = \langle K_w,g_j\rangle\langle f_j,K_z\rangle = f_j(z)\overline{g_j(w)}.$$
The conclusion of the lemma now follows from the density of the linear span of $\{K_w:w\in\D\}$.
\end{proof}

Suppose $\varphi(z)=(az+b)/(cz+d)$ is a linear fractional self-map of the unit disk. Let $\sigma(z)=(\bar{a}z-\bar{c})/(-\bar{b}z+\bar{d})$ be the Kre\u{\i}n adjoint of $\varphi$. It is known that $\sigma$ is also a self-map of $\mathbb{D}$. Let $\eta(z)=(cz+d)^{-1}$ and $\mu(z)=-\bar{b}z+\bar{d}$. Then $\eta$ and $\mu$ are bounded analytic functions on a neighborhood of the closed unit disk and
\begin{align*}
1-\overline{\varphi(w)}z & = \mu(z)\big(1-\bar{w}\sigma(z)\big)\overline{\eta(w)}.
\end{align*}
Consequently, by choosing appropriate branches of the logarithms, we have
\begin{align}
\label{Eqn:log}
\log\big(1-\overline{\varphi(w)}z\big) & = \log(\mu(z)) + \log\big(1-\bar{w}\sigma(z)\big) + \log(\overline{\eta(w)}).
\end{align}
Therefore, for any real number $\gamma$,
\begin{align}
\label{Eqn:exponent}
\Big(1-\overline{\varphi(w)}z\Big)^{\gamma} & = \mu(z)^{\gamma}\Big(1-\bar{w}\sigma(z)\Big)^{\gamma}\overline{\eta(w)^{\gamma}}
\end{align}
for $z, w$ in $\D$.

We are now in a position to discuss the adjoints of composition operators induced by linear fractional maps. In the following theorem, we consider the first two types of kernels.
\begin{theorem}
\label{T:AdjointFormula_a}
Let $\alpha$ be a real number such that $\alpha+2$ is not zero nor a negative integer. Let $\varphi(z)=(az+b)/(cz+d)$ be a linear fractional self-map of the unit disk and $\sigma$ be its Kre\u{\i}n adjoint. Let $g(z) = (-\bar{b}z+\bar{d})^{-\alpha-2}$ and $h(z)=(cz+d)^{\alpha+2}$ for $z\in\D$. Then $C_{\varphi}^{*}-M_gC_{\sigma}M^{*}_{h}$ has finite rank on $A^2_{\alpha}$. In the case $\alpha+2>0$, we actually have the identity $C_{\varphi}^{*} = M_{g}C_{\sigma}M^{*}_{h}$.
\end{theorem}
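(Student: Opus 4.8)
The plan is to compare the two bounded operators $C_{\varphi}^{*}$ and $M_{g}C_{\sigma}M_{h}^{*}$ by computing their ``reproducing-kernel functions'' $(z,w)\mapsto\langle A K_w,K_z\rangle$ and applying Lemma~\ref{L:finiteRankOperators}. First I would check that every operator in sight is bounded. The Kre\u{\i}n adjoint $\sigma$ has denominator $\mu(z)=-\bar b z+\bar d$, which together with $cz+d$ is nonvanishing on a neighborhood of $\overline{\D}$, so $\varphi$ and $\sigma$ are holomorphic there and $C_{\varphi},C_{\sigma}$ are bounded on $A^{2}_{\alpha}$ by Lemma~\ref{L:boundedCOs}; likewise $g=\mu^{-\alpha-2}$ and $h=(cz+d)^{\alpha+2}=\eta^{-(\alpha+2)}$ are holomorphic on a simply connected neighborhood of $\overline{\D}$ (with the branches fixed as in \eqref{Eqn:log}), so $M_{g},M_{h}$ are bounded by Lemma~\ref{L:multipliers_A2_alpha}. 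Hence $A:=C_{\varphi}^{*}-M_{g}C_{\sigma}M_{h}^{*}$ is a bounded operator, and it suffices to study the function $(z,w)\mapsto\langle A K_w,K_z\rangle$.

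Next I would carry out the two standard kernel computations. From $C_{\varphi}^{*}K_w=K_{\varphi(w)}$ and the reproducing property one gets $\langle C_{\varphi}^{*}K_w,K_z\rangle=K(z,\varphi(w))$. On the other side, $M_{h}^{*}K_w=\overline{h(w)}\,K_w$, $M_{g}^{*}K_z=\overline{g(z)}\,K_z$, and $(C_{\sigma}K_w)(z)=K(\sigma(z),w)$, so $\langle M_{g}C_{\sigma}M_{h}^{*}K_w,K_z\rangle=g(z)\,\overline{h(w)}\,K(\sigma(z),w)$. The singular part of these two expressions is matched by the identity \eqref{Eqn:exponent} with $\gamma=-(\alpha+2)$: since $g(z)=\mu(z)^{-(\alpha+2)}$ and $\overline{h(w)}=\overline{\eta(w)^{-(\alpha+2)}}$, that identity reads
\[
g(z)\,\overline{h(w)}\,\big(1-\sigma(z)\bar w\big)^{-(\alpha+2)}=\big(1-z\,\overline{\varphi(w)}\big)^{-(\alpha+2)} .
\]

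For the type-A kernel \eqref{Eqn:Kernel_typeA} the kernel consists only of this singular term, so the displayed identity gives $\langle C_{\varphi}^{*}K_w,K_z\rangle=\langle M_{g}C_{\sigma}M_{h}^{*}K_w,K_z\rangle$ for all $z,w$; since the linear span of $\{K_w:w\in\D\}$ is dense in $A^{2}_{\alpha}$, this forces $C_{\varphi}^{*}=M_{g}C_{\sigma}M_{h}^{*}$. For the type-B kernel \eqref{Eqn:Kernel_typeB}, $K(z,w)=(-1)^{N}(1-z\bar w)^{-(\alpha+2)}+Q(z\bar w)$ with $\deg Q\le N$; the singular terms cancel exactly as above, leaving
\[
\langle A K_w,K_z\rangle=Q\big(z\,\overline{\varphi(w)}\big)-g(z)\,\overline{h(w)}\,Q\big(\sigma(z)\bar w\big).
\]
Writing $Q(t)=\sum_{k=0}^{N}q_{k}t^{k}$, this equals $\sum_{k}q_{k}\,z^{k}\,\overline{\varphi(w)^{k}}-\sum_{k}q_{k}\,\big(g(z)\sigma(z)^{k}\big)\,\overline{h(w)\,w^{k}}$, a finite sum of products $f_{j}(z)\overline{g_{j}(w)}$ in which each factor ($z^{k}$, $\varphi^{k}$, $g\sigma^{k}$, $hw^{k}$) is holomorphic on a neighborhood of $\overline{\D}$ and hence lies in $A^{2}_{\alpha}$. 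Thus this function belongs to the class $\mathcal{X}$ of Lemma~\ref{L:finiteRankOperators}, so $A$ has finite rank (indeed rank at most $2(N+1)$).

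The only delicate point is the bookkeeping of branches: one must fix the branches of the various fractional powers ($\mu^{-(\alpha+2)}$, $\eta^{-(\alpha+2)}$, $(1-z\overline{\varphi(w)})^{-(\alpha+2)}$, $(1-\sigma(z)\bar w)^{-(\alpha+2)}$) consistently so that \eqref{Eqn:exponent} may be invoked verbatim; but that consistency has already been arranged in \eqref{Eqn:log}--\eqref{Eqn:exponent}, so no further analytic work is needed, and everything else is a routine reproducing-kernel manipulation.
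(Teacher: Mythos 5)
Your proposal is correct and follows essentially the same route as the paper: compute $\langle(C_{\varphi}^{*}-M_{g}C_{\sigma}M_{h}^{*})K_{w},K_{z}\rangle$, cancel the singular part via the factorization \eqref{Eqn:exponent}, and apply Lemma~\ref{L:finiteRankOperators} to the remaining polynomial expression. The only difference is that for $\alpha+2>0$ the paper simply cites Hurst's result (extended to $-2<\alpha\leq-1$), whereas you carry out the one-line kernel computation and density argument directly, which is the same underlying proof.
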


\begin{remark}
Lemmas \ref{L:multipliers_A2_alpha} and \ref{L:boundedCOs} show that the operators $C_{\varphi}$, $C_{\sigma}$, $M_{g}$ and $M_{h}$ are all bounded on $A^2_{\alpha}$.
\end{remark}

\begin{proof}[Proof of Theorem \ref{T:AdjointFormula_a}] 
As we mentioned before, the case $\alpha>-1$ was considered by Hurst \cite{HurstArchivM1997}. His proof works also for $-2<\alpha\leq -1$ since the kernels have the same form. Here we only need to investigate the case $-N < \alpha+2 < -N+1$ for some positive integer $N$. To simplify the notation, let $\gamma=-(\alpha+2)$. We then rewrite the kernel as $K(z,w) = (-1)^N(1-\bar{w}z)^{\gamma}+Q(\bar{w}z)$ for $z,w\in\mathbb{D}$. Set $K_w(z) = K(z,w)$ for $z,w\in\D$. We shall make use of the following identities, which are well known,
\begin{align*}
M^{*}_{h}K_w = \overline{h(w)}K_w,\quad M^{*}_{g}K_z = \overline{g(z)}K_z, \quad C_{\varphi}^{*}K_w = K_{\varphi(w)}. 
\end{align*}
We now compute
\begin{align*}
\langle(C_{\varphi}^{*} - M_{g}C_{\sigma}M^{*}_{h})K_{w},K_z\rangle & = K(z,\varphi(w)) - g(z)K(\sigma(z),w)\overline{h(w)}\\
& = (-1)^N(1-\overline{\varphi(w)}z)^{\gamma} + Q(\overline{\varphi(w)}z)\\
&\quad - g(z)\Big((-1)^N(1-\bar{w}\sigma(z))^{\gamma}+Q(\bar{w}\sigma(z))\Big)\overline{h(w)}\\
&= Q(\overline{\varphi(w)}z) - g(z)Q(\bar{w}\sigma(z))\overline{h(w)} \quad \text{(using \eqref{Eqn:exponent}).}
\end{align*}
Since $g$ and $h$ are analytic on a neighborhood of the closed unit disk and $Q$ is a polynomial, the last function has the form \eqref{Eqn:quasi_Functions}. Consequently, Lemma \ref{L:finiteRankOperators} shows that $C_{\varphi}^{*} - M_{g}C_{\sigma}M^{*}_{h}$ has finite rank. 
\end{proof}

The following theorem considers the third type of kernel.

\begin{theorem}
\label{T:AdjointFormula_b}
Let $\alpha$ be a real number such that $\alpha+2$ is zero or a negative integer. Let $\varphi(z)=(az+b)/(cz+d)$ be a linear fractional self-map of the unit disk and $\sigma$ be its Kre\u{\i}n adjoint. Let $g(z) = (-\bar{b}z+\bar{d})^{-\alpha-2}$ and $h(z)=(cz+d)^{\alpha+2}$ for $z\in\D$. Then $C_{\varphi}^{*}-M_gC_{\sigma}M^{*}_{h}$ has finite rank on $A^2_{\alpha}$.
\end{theorem}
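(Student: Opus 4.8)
The plan is to mimic the proof of Theorem \ref{T:AdjointFormula_a} almost verbatim, the only new feature being the logarithmic factor in the kernel \eqref{Eqn:Kernel_typeC}. Write $\gamma=-(\alpha+2)=N$, so the kernel of $A^2_{\alpha}$ is $K(z,w)=(\bar{w}z-1)^{N}\log\big(1/(1-\bar{w}z)\big)+Q(\bar{w}z)$ with $Q$ a polynomial of degree $N$, and set $T:=C_{\varphi}^{*}-M_{g}C_{\sigma}M^{*}_{h}$. First I would record that $C_{\varphi}$, $C_{\sigma}$, $M_{g}$, $M_{h}$ are all bounded on $A^2_{\alpha}$ by Lemmas \ref{L:multipliers_A2_alpha} and \ref{L:boundedCOs}, so the reproducing-kernel identities $M^{*}_{h}K_w=\overline{h(w)}K_w$, $C_{\sigma}^{*}K_z=K_{\sigma(z)}$ and $C_{\varphi}^{*}K_w=K_{\varphi(w)}$ are available; exactly as in Theorem \ref{T:AdjointFormula_a} they give $\langle TK_w,K_z\rangle=K(z,\varphi(w))-g(z)K(\sigma(z),w)\overline{h(w)}$. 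By Lemma \ref{L:finiteRankOperators} it then suffices to show that this function of $(z,w)$ has the form \eqref{Eqn:quasi_Functions}.

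To evaluate it I would keep the notation $\mu(z)=-\bar{b}z+\bar{d}$ and $\eta(z)=(cz+d)^{-1}$ of the paragraph preceding Theorem \ref{T:AdjointFormula_a}, noting that $g=\mu^{N}$ and $h=\eta^{N}$, both zero-free and holomorphic on a neighborhood of $\overline{\D}$. Applying \eqref{Eqn:exponent} with exponent $N$ gives $(\overline{\varphi(w)}z-1)^{N}=g(z)\,\overline{h(w)}\,(\bar{w}\sigma(z)-1)^{N}$, while \eqref{Eqn:log} gives $\log\big(1/(1-\overline{\varphi(w)}z)\big)=\log\big(1/(1-\bar{w}\sigma(z))\big)-\log\mu(z)-\log\big(\overline{\eta(w)}\big)$. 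Substituting these into $K(z,\varphi(w))$, the leading summand $g(z)\overline{h(w)}(\bar{w}\sigma(z)-1)^{N}\log\big(1/(1-\bar{w}\sigma(z))\big)$ is exactly $g(z)\overline{h(w)}$ times the logarithmic part of $K(\sigma(z),w)$, so it cancels the corresponding term in $g(z)K(\sigma(z),w)\overline{h(w)}$ — the analogue of the cancellation of the $(1-\bar{w}\sigma(z))^{\gamma}$ term in Theorem \ref{T:AdjointFormula_a}.

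What survives is $\langle TK_w,K_z\rangle=-g(z)\overline{h(w)}(\bar{w}\sigma(z)-1)^{N}\big(\log\mu(z)+\log\overline{\eta(w)}\big)+Q(\overline{\varphi(w)}z)-g(z)\overline{h(w)}Q(\bar{w}\sigma(z))$. I would then expand $(\bar{w}\sigma(z)-1)^{N}=\sum_{k=0}^{N}\binom{N}{k}(-1)^{N-k}\sigma(z)^{k}\,\overline{w^{k}}$ and likewise expand $Q(\overline{\varphi(w)}z)$ and $Q(\bar{w}\sigma(z))$ into finitely many monomials in $z$ (resp.\ $\sigma(z)$) times conjugated monomials in $\varphi(w)$ (resp.\ $w$). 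Because $\varphi,\sigma,\mu,\eta$ and suitable branches of $\log\mu,\log\eta$ are all holomorphic on a neighborhood of $\overline{\D}$, every resulting factor depending only on $z$ — such as $g\sigma^{k}$, $g\sigma^{k}\log\mu$, $z^{k}$ — and every factor depending only on $w$ — such as $h(w)w^{k}$, $h(w)w^{k}\log\eta(w)$, $\varphi(w)^{k}$ — is holomorphic near $\overline{\D}$ and hence belongs to $A^2_{\alpha}$. Thus $\langle TK_w,K_z\rangle$ is a finite sum of the form \eqref{Eqn:quasi_Functions}, and Lemma \ref{L:finiteRankOperators} yields that $T$ has finite rank.

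The step I expect to be the crux is handling the two non-cancelling summands $-g(z)\overline{h(w)}(\bar{w}\sigma(z)-1)^{N}\log\mu(z)$ and $-g(z)\overline{h(w)}(\bar{w}\sigma(z)-1)^{N}\log\overline{\eta(w)}$ produced by the logarithm: these have no counterpart in the computation on $A^2_{\alpha}$ with $\alpha>-1$, and they are precisely why only a finite-rank, not an exact, identity can hold in this case. The observation that rescues the argument is that $A^2_{\alpha}$ contains every function holomorphic on a neighborhood of $\overline{\D}$, so these extra pieces automatically fall into the class of Lemma \ref{L:finiteRankOperators}. The only small point needing care is that the branches of $\log\mu$ and $\log\eta$ be chosen compatibly so that \eqref{Eqn:log} holds on $\D$ — but this was already arranged in the discussion leading to \eqref{Eqn:log}.
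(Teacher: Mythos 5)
Your proposal is correct and follows essentially the same route as the paper's proof: the same kernel computation, the same use of \eqref{Eqn:exponent} to get $(\overline{\varphi(w)}z-1)^{N}=g(z)\overline{h(w)}(\bar{w}\sigma(z)-1)^{N}$ and of \eqref{Eqn:log} to cancel the leading logarithmic term, leaving a remainder of the form \eqref{Eqn:quasi_Functions} handled by Lemma \ref{L:finiteRankOperators}. Your explicit binomial expansion of the surviving terms only spells out what the paper leaves implicit.
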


\begin{proof}
Let $N=-\alpha-2$. Then $N$ is a nonnegative integer. Recall that the kernel in this case has the form
$$K(z,w)=\big(\bar{w}z-1\big)^{N}\log\big(\dfrac{1}{1-\bar{w}z}\big)+Q(\bar{w}z)$$ for $z,w\in\mathbb{D}$,  where $Q$ is an analytic polynomial. We compute
\begin{align*}
&\langle(C_{\varphi}^{*} - M_{g}C_{\sigma}M^{*}_{h})K_{w},K_z\rangle\\
&\ = K\big(z,\varphi(w)\big) - g(z)K\big(\sigma(z),w\big)\overline{h(w)}\\
&\ = -\big(\overline{\varphi(w)}z-1\big)^{N}\log\big(1-\overline{\varphi(w)}z\big) + g(z)(\bar{w}\sigma(z)-1)^{N}\log(1-\bar{w}\sigma(z))\overline{h(w)}\\
&\qquad\qquad + Q(\overline{\varphi(w)}z) - g(z)Q(\bar{w}\sigma(z))\overline{h(w)}.
\end{align*}
Since $g(z)(\bar{w}\sigma(z)-1)^{N}\overline{h(w)}=(\overline{\varphi(w)}z-1)^{N}$, using \eqref{Eqn:log}, we simplify the first two terms in the last expression as
\begin{align*}
&\big(\overline{\varphi(w)}z-1\big)^{N}\Big(-\log(1-\overline{\varphi(w)}z)+\log(1-\bar{w}\sigma(z))\Big) & \\
&\qquad\qquad =-\big(\overline{\varphi(w)}z-1\big)^{N}\Big(\log(\mu(z))+\log(\overline{\eta(w)})\Big),
\end{align*}
where $\eta(z)=(cz+d)^{-1}$ and $\mu(z)=-\bar{b}z+\bar{d}$ for $z\in\D$. Consequently,
\begin{align*}
\langle(C_{\varphi}^{*} - M_{g}C_{\sigma}M^{*}_{h})K_{w},K_z\rangle
& = -\big(\overline{\varphi(w)}z-1\big)^{N}\Big(\log(\mu(z))+\log(\overline{\eta(w)})\Big) \\
&\qquad + Q(\overline{\varphi(w)}z) - g(z)Q(\bar{w}\sigma(z))\overline{h(w)}.
\end{align*}
Since $N$ is a nonnegative integer and $Q$ is a polynomial, the expression on the right hand side is an element of the form \eqref{Eqn:quasi_Functions}. Lemma \ref{L:finiteRankOperators} shows that $C_{\varphi}^{*} - M_{g}C_{\sigma}M^{*}_{h}$ has finite rank.
\end{proof}

\section{Adjoint formulas on $H^2(\beta)$}

In this section we would like to generalize the results in Section 2 to certain weighted Hardy spaces $H^2(\beta)$. We begin with an auxiliary result. For $s=1,2$, consider a Hilbert space $H_s$ of analytic functions on the unit disk such that
\begin{align*}
\langle z^j, z^{\ell}\rangle & = 
      \begin{cases}
         0 & \text{ if } j\neq \ell,\\
         \beta^2_s(j) & \text{ if } j=\ell.
      \end{cases}
\end{align*}
Here, $\{\beta_s(n)\}_{n=0}^{\infty}$ is a sequence of positive real numbers with $\liminf_{n\to\infty}\beta_s(n)^{1/n} = 1$. Such restriction guarantees that elements of $H_s$ are analytic function on the unit disk, see for example, \cite[Exercise 2.1.10]{CowenCRCP1995}. Assume that 
\begin{equation}
\label{Eqn:equivWeights}
\lim_{n\to\infty}\dfrac{\beta_2(n)}{\beta_1(n)}=\alpha>0.
\end{equation}
It is clear that the norms on $H_1$ and $H_2$ are equivalent. We claim that there is a compact operator $K: H_2\rightarrow H_2$ such that for all functions $f,g\in H_1$,
\begin{equation}\label{Eqn:cptPerturbation}
\alpha^2\langle f,g\rangle_{1} = \langle f,g\rangle_2 + \langle Kf,g\rangle_2.
\end{equation}
In fact, define the operator $K:H_2\rightarrow H_2$ by
\begin{align*}
K(z^n) & = \Big(\dfrac{\alpha^2\beta_1(n)^2}{\beta_2(n)^2}-1\Big)z^n,
\end{align*}
for $n=0,1,\ldots$ and extend by linearity and continuity to all $H_2$. We see that $K$ is a self-adjoint diagonal operator with respect to the orthonormal basis of monomials. By \eqref{Eqn:equivWeights}, \cite[Proposition II.4.6]{ConwaySpringer1990} shows that $K$ is a compact operator on $H_2$, hence on $H_1$ as well. It is clear that \eqref{Eqn:cptPerturbation} holds for $f(z)=z^j$ and $g(z)=z^{\ell}$ if $j\neq\ell$. If $j=\ell$, then we compute
\begin{align*}
\alpha^2\langle z^j,z^j\rangle_{1} & = \alpha^2\beta^2_1(j) = \beta_2^2(j) + \Big(\dfrac{\alpha
^2\beta_1(j)^2}{\beta_2(j)^2}-1\Big)\beta_2^2(j)\\
& = \langle z^j,z^j\rangle_2 + \langle Kz^j,z^j\rangle_2.
\end{align*}
Linearity and boundedness of $K$ then shows that \eqref{Eqn:cptPerturbation} holds for all $f,g\in H_1$.

\begin{proposition}
\label{P:compactDifference}
Let $A$ be a bounded linear operator on $H_1$ (hence, $A$ is also bounded on $H_2$). Let $B_s$ be the adjoint of $A$ on $H_s$ for $s=1,2$. Then $B_2-B_1$ is a compact operator on $H_2$ (hence, on $H_1$ as well).
\end{proposition}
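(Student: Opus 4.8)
The plan is to express the difference $B_2 - B_1$ directly in terms of the compact operator $K$ constructed just before the proposition. The starting point is the defining relation for adjoints: for all $f, g \in H_1$ (equivalently $H_2$, since the norms are equivalent),
\begin{equation*}
\langle Af, g\rangle_1 = \langle f, B_1 g\rangle_1 \quad\text{and}\quad \langle Af, g\rangle_2 = \langle f, B_2 g\rangle_2.
\end{equation*}
First I would use \eqref{Eqn:cptPerturbation} to convert each $H_1$-inner product into an $H_2$-inner product: writing $\langle u, v\rangle_1 = \alpha^{-2}\bigl(\langle u, v\rangle_2 + \langle Ku, v\rangle_2\bigr) = \alpha^{-2}\langle (I+K)u, v\rangle_2$, the first adjoint relation becomes
\begin{equation*}
\langle (I+K)Af, g\rangle_2 = \langle (I+K)f, B_1 g\rangle_2 = \langle f, (I+K)B_1 g\rangle_2,
\end{equation*}
using that $K$, hence $I+K$, is self-adjoint on $H_2$. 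On the other hand $\langle (I+K)Af, g\rangle_2 = \langle Af, (I+K)g\rangle_2 = \langle f, B_2(I+K)g\rangle_2$. Since this holds for all $f$, we get the operator identity $(I+K)B_1 = B_2(I+K)$ on $H_2$, i.e.
\begin{equation*}
B_2 - B_1 = B_2 K - K B_1 = (B_2 - B_1)K + [B_1, K]\ \text{—better yet just }\ B_2 - B_1 = B_2 K - K B_1
\end{equation*}
after rearranging $(I+K)B_1 = B_2 + B_2 K$.

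From $B_2 - B_1 = B_2 K - K B_1$, the conclusion is immediate: $K$ is compact on $H_2$ by the construction preceding the proposition (it is a diagonal operator whose eigenvalues $\alpha^2\beta_1(n)^2/\beta_2(n)^2 - 1$ tend to $0$ by \eqref{Eqn:equivWeights}), and $B_1, B_2$ are bounded on $H_2$; since the compact operators form a two-sided ideal in the bounded operators, both $B_2 K$ and $K B_1$ are compact, hence so is their difference $B_2 - B_1$. The same operator is then compact on $H_1$ as well because the identity map between $H_1$ and $H_2$ is a bounded invertible operator (the norms being equivalent), and compactness is preserved under such a similarity.

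The only genuinely delicate point is bookkeeping with which Hilbert space each adjoint and each inner product lives in, and making sure the self-adjointness of $I+K$ is invoked on the correct space ($H_2$, where $K$ was defined and shown self-adjoint with respect to the monomial orthonormal basis). There is no analytic difficulty: once the algebraic identity $(I+K)B_1 = B_2(I+K)$ is in hand, everything follows from the ideal property of compact operators. I would also remark, for safety, that boundedness of $A$ on $H_1$ forces boundedness on $H_2$ and that $B_1$, $B_2$ are genuinely bounded (their norms are controlled by $\|A\|$ and the norm-equivalence constants), so the products $B_2 K$ and $K B_1$ make sense as bounded operators before invoking compactness.
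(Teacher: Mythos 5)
Your proof is correct and follows essentially the same route as the paper: both derive the intertwining identity $(I+K)B_1 = B_2(I+K)$ from \eqref{Eqn:cptPerturbation} together with the self-adjointness of $K$ on $H_2$, and then conclude from the fact that the compact operators form a two-sided ideal. The only blemish is a sign slip in the rearrangement---the identity yields $B_2 - B_1 = KB_1 - B_2K$ rather than $B_2K - KB_1$---which is immaterial, since either expression is compact.
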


\begin{proof}
For $f,g\in H_2$, we have
\begin{align*}
 \langle B_2(I+K)f,g\rangle_2 & = \langle (I+K)f,Ag\rangle_2\quad\text{(since $B_2$ is the adjoint of $A$ in $H_2$)}\\
 & = \alpha^2\langle f,Ag\rangle_1\quad\text{(by \eqref{Eqn:cptPerturbation})}\\
 & = \alpha^2\langle B_1f,g\rangle_1\quad\text{(since $B_1$ is the adjoint of $A$ in $H_1$)}\\
 & = \langle (I+K)B_1f,g\rangle_2\quad\text{(by \eqref{Eqn:cptPerturbation}).}
\end{align*}
This implies $B_2(I+K)=(I+K)B_1$, which shows that $B_2-B_1 = KB_1-B_2K$. Since $K$ is compact on $H_2$, we conclude that $B_2-B_1$ is compact as well.
\end{proof}

We now state and prove our main result in this section.

\begin{theorem}
\label{T:mainTheorem}
Let $t$ be a real number. Suppose $\beta=\{\beta(n)\}_{n=0}^{\infty}$ is a sequence of positive numbers such that
\begin{equation}
\label{Eqn:condition_main_Theorem}
\lim_{n\to\infty}\dfrac{\beta(n)}{n^{t}} = \ell,
\end{equation}
where $0<\ell<\infty$. Let $\varphi(z)=(az+b)/(cz+d)$ be a linear fractional self-map of the unit disk and $\sigma$ be its Kre\u{\i}n adjoint. Let $g(z)=(-\bar{b}z+\bar{d})^{2t-1}$ and $h(z)=(cz+d)^{-2t+1}$. Then the difference $C_{\varphi}^{*} - M_{g}C_{\sigma}M^{*}_{h}$ is a compact operator on $H^{2}(\beta)$.
\end{theorem}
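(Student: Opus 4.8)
The plan is to reduce Theorem \ref{T:mainTheorem} to the weighted Bergman space results of Section 2 via the compact-perturbation machinery of Proposition \ref{P:compactDifference}. First I would observe that the exponent $2t-1$ appearing in the symbols $g$ and $h$ is exactly what matches the asymptotics: by Remark \ref{R:asymptoticBehavior}, $\|z^m\|_{\alpha}$ behaves like $m^{-(\alpha+1)/2}$, so the weight sequence of $A^2_{\alpha}$ behaves like $m^{t}$ precisely when $-(\alpha+1)/2 = t$, i.e. $\alpha = -2t-1$, giving $\alpha+2 = -2t+1$; thus $g(z) = (-\bar b z + \bar d)^{-\alpha-2}$ and $h(z) = (cz+d)^{\alpha+2}$ in the notation of Theorems \ref{T:AdjointFormula_a} and \ref{T:AdjointFormula_b} coincide with the $g,h$ in the present statement. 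So I would set $\alpha = -2t-1$ once and for all.

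Next I would split into two cases according to whether $\alpha+2 = -2t+1$ is zero or a negative integer, or not; in the former case invoke Theorem \ref{T:AdjointFormula_b}, in the latter Theorem \ref{T:AdjointFormula_a}. In either case we know that on $A^2_{\alpha}$ the operator $C_{\varphi}^{*_\alpha} - M_g C_\sigma M_h^{*_\alpha}$ has finite rank, where I write $*_\alpha$ to emphasize the adjoint is taken in the $A^2_{\alpha}$ inner product. The point is that the \emph{operators} $C_\varphi$, $C_\sigma$, $M_g$, $M_h$ are honest bounded operators on both $A^2_{\alpha}$ and $H^2(\beta)$ (boundedness on $A^2_{\alpha}$ is Lemmas \ref{L:multipliers_A2_alpha} and \ref{L:boundedCOs}, and boundedness on $H^2(\beta)$ follows since $\varphi,\sigma,g,h$ extend holomorphically past the closed disk and $\beta$ is comparable to the $A^2_\alpha$ weight, so the same norm estimates apply — or one simply cites that the norms are equivalent as noted before Proposition \ref{P:compactDifference}); only the adjoints depend on the inner product.

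To apply Proposition \ref{P:compactDifference} I need $H^2(\beta)$ and $A^2_{\alpha}$ to be two spaces $H_1, H_2$ with weight sequences satisfying the ratio condition \eqref{Eqn:equivWeights}. Take $H_1 = A^2_{\alpha}$ with weights $\beta_1(n) = \|z^n\|_\alpha$ and $H_2 = H^2(\beta)$ with weights $\beta_2(n) = \beta(n)$. By \eqref{Eqn:condition_main_Theorem}, $\beta_2(n)/n^t \to \ell$, and by the asymptotic description of $\|z^n\|_\alpha$ there is a constant $c>0$ with $\beta_1(n)/n^t \to c$ (here is where I would want to pin down that the asymptotics quoted in Section 2 are genuine limits of $\|z^n\|_\alpha / n^t$, not merely the coarser $\approx$; Stirling's formula gives this for types A and B, and the cited argument near \cite[Theorem 44]{ZhaoMSMF2008} for type C). Hence $\beta_2(n)/\beta_1(n) \to \ell/c > 0$, so \eqref{Eqn:equivWeights} holds and the liminf-of-$n$th-root condition is automatic since both sequences grow polynomially. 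Now apply Proposition \ref{P:compactDifference} three times — with $A = C_\varphi$, $A = C_\sigma$, and $A = M_h$ — to conclude that the $H^2(\beta)$-adjoint of each differs from its $A^2_{\alpha}$-adjoint by a compact operator on $H^2(\beta)$; composing bounded operators with compact ones, $C_\varphi^{*} - M_g C_\sigma M_h^{*}$ (adjoints in $H^2(\beta)$) differs from $C_\varphi^{*_\alpha} - M_g C_\sigma M_h^{*_\alpha}$ (adjoints in $A^2_\alpha$) by a compact operator. Since the latter is finite rank by Theorem \ref{T:AdjointFormula_a} or \ref{T:AdjointFormula_b}, the former is compact on $H^2(\beta)$, which is the claim.

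I expect the only genuine obstacle to be the bookkeeping around the adjoints: Proposition \ref{P:compactDifference} gives compactness of $B_2 - B_1$ where $B_s$ is the adjoint of a \emph{single} operator $A$ in $H_s$, so I must be careful that $M_g C_\sigma M_h^{*}$ has its middle factor $M_g C_\sigma$ acting the same way in both spaces and only the trailing $M_h^{*}$ (and the leading $C_\varphi^{*}$) change — which is fine because $M_g$ and $C_\sigma$ are not adjointed. Concretely: $(M_g C_\sigma M_h^{*})_{H^2(\beta)} - (M_g C_\sigma M_h^{*})_{A^2_\alpha} = M_g C_\sigma\big((M_h^{*})_{H^2(\beta)} - (M_h^{*})_{A^2_\alpha}\big)$, compact since the parenthesized difference is compact (Proposition \ref{P:compactDifference}) and $M_g C_\sigma$ is bounded; and $(C_\varphi^{*})_{H^2(\beta)} - (C_\varphi^{*})_{A^2_\alpha}$ is compact directly. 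Adding, the difference of the two ``$C_\varphi^* - M_g C_\sigma M_h^*$'' expressions is compact, and we are done. A secondary minor point is justifying boundedness of the four operators on $H^2(\beta)$; the cleanest route is to note that $H^2(\beta)$ and $A^2_\alpha$ have equivalent norms (by \eqref{Eqn:equivWeights}, as already remarked in the text), so any operator bounded on one is bounded on the other, and Lemmas \ref{L:multipliers_A2_alpha} and \ref{L:boundedCOs} then supply boundedness on $A^2_\alpha$.
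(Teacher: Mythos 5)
Your proposal is correct and follows essentially the same route as the paper's own proof: set $\alpha=-2t-1$, match the weight asymptotics via Remark \ref{R:asymptoticBehavior} so that $H^2(\beta)=A^2_{\alpha}$ with equivalent norms, invoke Theorems \ref{T:AdjointFormula_a} and \ref{T:AdjointFormula_b} for the finite-rank statement on $A^2_{\alpha}$, and then use Proposition \ref{P:compactDifference} to transfer the adjoints of $C_{\varphi}$ and $M_h$ (the only adjointed factors) at the cost of compact perturbations. Your extra care about the asymptotics being genuine limits, and the bookkeeping noting that $M_gC_{\sigma}$ needs no adjustment, are exactly the points the paper's proof relies on.
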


\begin{proof}
Let $\alpha=-2t-1$. Then $t=-(\alpha+1)/2$ and we have
\begin{align*}
\lim_{m\to\infty}\dfrac{\beta(m)}{\|z\|_{\alpha}} & = \Big(\lim_{m\to\infty}\dfrac{\beta(m)}{m^{t}}\Big)\Big(\lim_{m\to\infty}\dfrac{m^{t}}{\|z^m\|_{\alpha}}\Big)\\
& = \ell\lim_{m\to\infty}\dfrac{m^{-(\alpha+1)/2}}{\|z^m\|_{\alpha}}.
\end{align*}
The last limit is a finite positive number by Remark \ref{R:asymptoticBehavior}. This, in particular, says that the spaces $A^2_{\alpha}$ and $H^2(\beta)$ are the same with equivalent norms. For any bounded operator $T$ on these spaces, we write $T^{*,\alpha}$ for the adjoint of $T$ as an operator on $A^2_{\alpha}$ and $T^{*,\beta}$ for the adjoint of $T$ as an operator on $H^2(\beta)$.

By Theorems \ref{T:AdjointFormula_a} and \ref{T:AdjointFormula_b}, the difference $K=C_{\varphi}^{*,\alpha} - M_{g}C_{\sigma}M_{h}^{*,\alpha}$ is compact on $A^2_{\alpha}$, hence on $H^2(\beta)$ as well.

On the other hand, applying Proposition \ref{P:compactDifference} with $H_1=A^2_{\alpha}$ and $H_2=H^2(\beta)$, we have $C_{\varphi}^{*,\beta} = C_{\varphi}^{*,\alpha}+K_1$ and $M_{h}^{*,\beta} = M_{h}^{*,\alpha}+K_2$ for some compact operators $K_1, K_2$ on $H^2(\beta)$. Consequently,
\begin{align*}
C_{\varphi}^{*,\beta} - M_{g}C_{\sigma}M_h^{*,\beta} & = (C_{\varphi}^{*,\alpha}+K_1) - M_{g}C_{\sigma}(M_h^{*,\alpha}+K_2)\\
& = C_{\varphi}^{*,\alpha} - M_{g}C_{\sigma}M_h^{*,\alpha} + K_1 - M_{g}C_{\sigma}K_2\\
& = K + K_1 - M_gC_{\sigma}K_2,
\end{align*}
which is compact on $H^2(\beta)$. This completes the proof of the theorem.
\end{proof}

We now explain how one obtains Heller's results from our Theorem \ref{T:mainTheorem}. Let $\varphi$ be a holomorphic self-map of the unit disk. We shall consider two particular cases: the case $\varphi(0)=0$ and the case $\varphi$ is an automorphism.

\begin{corollary}
\label{C:improved_TheoremA}
Let $\beta=\{\beta(n)\}_{n=0}^{\infty}$ be a sequence of positive numbers satisfying the condition \eqref{Eqn:condition_main_Theorem}. Let $\varphi(z)=az/(cz+d)$ be a holomorphic self-map of the disk and consider $C_{\varphi}$ acting on $H^2(\beta)$. Then we have
$$C_{\varphi}^{*} = M_{G}^{*}C_{\sigma} \, \text{ mod } \, \mathcal{K},$$
where $G(z)=\big(-(c/a)z+1\big)^{2t-1}$ and $\sigma(z)=(\bar{a}/\bar{d})z-\bar{c}/\bar{d}$ is the Kre\u{\i}n adjoint of $\varphi$.
\end{corollary}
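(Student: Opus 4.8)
The plan is to deduce the formula from Theorem \ref{T:mainTheorem} applied \emph{to the map $\sigma$} rather than to $\varphi$, together with a trivial intertwining identity. The key reduction is that it suffices to establish
$$C_{\sigma}^{*}\,M_{G} = C_{\varphi}\ \ \text{mod}\ \mathcal{K},$$
for then, taking adjoints of this identity and recalling that the adjoint of a compact operator is compact, one gets $M_{G}^{*}C_{\sigma} = C_{\varphi}^{*}$ mod $\mathcal{K}$, which is the assertion of the corollary.

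First I would apply Theorem \ref{T:mainTheorem} to the linear fractional self-map $\sigma$. Writing $\sigma(z)=(\bar a z-\bar c)/(0\cdot z+\bar d)$, a direct computation shows that the Kre\u{\i}n adjoint of $\sigma$ is $\varphi$ itself (the Kre\u{\i}n adjoint is an involution), and that the weight functions attached to $\sigma$ by Theorem \ref{T:mainTheorem} are $\tilde g(z)=(cz+d)^{2t-1}$ and $\tilde h(z)=\bar d^{\,-2t+1}$, the latter a constant. Since $M_{\tilde h}^{*}=d^{\,-2t+1}I$, Theorem \ref{T:mainTheorem} yields
$$C_{\sigma}^{*} = d^{\,-2t+1}\,M_{(cz+d)^{2t-1}}\,C_{\varphi}\ \ \text{mod}\ \mathcal{K}$$
(here I identify $H^{2}(\beta)$ with the appropriate $A^{2}_{\alpha}$ as in the proof of Theorem \ref{T:mainTheorem}, and use Lemmas \ref{L:multipliers_A2_alpha} and \ref{L:boundedCOs} to know all operators are bounded; $cz+d$ has no zero on $\overline{\D}$ because $\varphi$ extends holomorphically past $\overline{\D}$). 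Next I would multiply on the right by $M_{G}$ and use the elementary relation $C_{\varphi}M_{G}=M_{G\circ\varphi}\,C_{\varphi}$. A one-line calculation gives $1-(c/a)\varphi(z)=d/(cz+d)$, hence $G\circ\varphi(z)=\big(d/(cz+d)\big)^{2t-1}=d^{\,2t-1}(cz+d)^{-2t+1}$, the branch being the one equal to $d^{\,2t-1}$ at $z=0$. Therefore $M_{(cz+d)^{2t-1}}M_{G\circ\varphi}=d^{\,2t-1}I$, and
$$C_{\sigma}^{*}M_{G} = d^{\,-2t+1}\,M_{(cz+d)^{2t-1}}\,C_{\varphi}\,M_{G} = d^{\,-2t+1}\,d^{\,2t-1}\,C_{\varphi} = C_{\varphi}\ \ \text{mod}\ \mathcal{K},$$
and taking adjoints finishes the proof.

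The computation is short, and I do not expect a deep obstacle; the only points needing care are the bookkeeping of the several non-integer powers (fixing all branches so each factor takes the expected value at the origin and checking that the exponents cancel) and the standing requirement that $G(z)=(-(c/a)z+1)^{2t-1}$ really be a multiplier of $H^{2}(\beta)$ — this is automatic when $2t-1$ is a non-negative integer, the case $t=1$ recovering Heller's Theorem A, and more generally when $-(c/a)z+1$ is zero-free on $\overline{\D}$. Once those conventions are pinned down, the scalar and multiplier factors collapse by themselves, so the work is entirely in the routine algebra of linear fractional maps.
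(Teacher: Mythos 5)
Your argument is correct, but it takes a route dual to the paper's. The paper applies Theorem \ref{T:mainTheorem} to $\varphi$ itself, so that the constant factor shows up in $g$ (because $b=0$); it then absorbs that constant into $h$, solves the resulting relation for $C_{\sigma}$, identifies $h_1=G\circ\varphi$, and uses the intertwining $M_{1/G\circ\varphi}C_{\varphi}=C_{\varphi}M_{1/G}$ before passing to adjoints and multiplying by $M_{G}^{*}$. You instead apply Theorem \ref{T:mainTheorem} to $\sigma$, exploiting that the Kre\u{\i}n adjoint is an involution; since the ``$c$''-coefficient of $\sigma$ vanishes, it is now $\tilde h$ that is constant, and the theorem hands you $C_{\sigma}^{*}=d^{-2t+1}M_{(cz+d)^{2t-1}}C_{\varphi}$ mod $\mathcal{K}$ directly, after which the same two ingredients the paper uses --- the computation $G\circ\varphi(z)=\big(d/(cz+d)\big)^{2t-1}$ and the relation $C_{\varphi}M_{G}=M_{G\circ\varphi}C_{\varphi}$ --- collapse everything, with only one adjoint taken at the very end. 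The substantive computations are the same in both arguments; your version is marginally tidier because the multiplication operator lands on the correct side of $C_{\varphi}$ from the outset. One point in your favor: you explicitly flag that $G(z)=\big(1-(c/a)z\big)^{2t-1}$ must actually be a multiplier (automatic for non-negative integer exponents, and otherwise requiring $1-(c/a)z$ to be zero-free on the closed disk so a branch can be fixed); the paper's proof manipulates $M_{1/G}$ and $M_{G}^{*}$ without comment, so this caveat applies equally to both proofs and is not a defect of yours.
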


\begin{proof}
Theorem \ref{T:mainTheorem} shows that
\begin{align}
\label{Eqn:adjointCOs}
C_{\varphi}^{*} = M_{g}C_{\sigma}M_h^{*} \ \text{ mod } \ \mathcal{K},
\end{align}
where $g(z)=(\bar{d})^{2t-1}$,\, $h(z)=(cz+d)^{-2t+1}$. Since $g$ is a constant function, we may combine it with $h$ and rewrite \eqref{Eqn:adjointCOs} as $C_{\varphi}^{*} = C_{\sigma}M^{*}_{h_1}\, \text{ mod }\, \mathcal{K}$,
where $h_1(z)=\big(d/(cz+d)\big)^{2t-1}$. It then follows that $C_{\sigma} = C_{\varphi}^{*}M_{1/h_1}^{*} \, \text{ mod } \, \mathcal{K}$. Now, a direct calculation verifies that $h_1=G\circ\varphi$. We then compute
\begin{align*}
C_{\sigma} = C_{\varphi}^{*}M_{1/G\circ\varphi}^{*} = \Big(M_{1/G\circ\varphi}C_{\varphi}\Big)^{*} = \Big(C_{\varphi}M_{1/G}\Big)^{*} = M^{*}_{1/G}C_{\varphi}^{*}.
\end{align*}
Multiplying by $M_{G}^{*}$ on the left gives $C_{\varphi}^{*} = M_{G}^{*}C_{\sigma} \, \text{ mod } \, \mathcal{K}$ as desired.
\end{proof}

\begin{corollary}
\label{C:improved_TheoremB}
Let $\beta=\{\beta(n)\}_{n=0}^{\infty}$ be a sequence of positive numbers satisfying the condition \eqref{Eqn:condition_main_Theorem}. Let $\varphi(z)=\lambda(z+u)/(1+\bar{u}z)$, $|\lambda|=1$, $|u|<1$, be an automorphism of the disk and consider $C_{\varphi}$ acting on $H^2(\beta)$. Then $$C_{\varphi}^{*} = M_{G}^{*}C_{\varphi^{-1}}M_{1/H} \text{ mod } \mathcal{K},$$
where $G(z)=(-\overline{\lambda u}\,z+1)^{2t-1}$ and $H(z)=(\bar{u}z+1)^{2t-1}$.
\end{corollary}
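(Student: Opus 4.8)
The plan is to deduce Corollary~\ref{C:improved_TheoremB} from Theorem~\ref{T:mainTheorem} in essentially the same way that Corollary~\ref{C:improved_TheoremA} was deduced, by first writing down the raw formula from the theorem and then massaging the multiplication operators into the shape claimed by Heller. Writing $\varphi(z)=\lambda(z+u)/(1+\bar u z)$ in the form $(az+b)/(cz+d)$ we read off $a=\lambda$, $b=\lambda u$, $c=\bar u$, $d=1$. Theorem~\ref{T:mainTheorem} then gives
\begin{align*}
C_{\varphi}^{*} = M_{g}C_{\sigma}M_{h}^{*}\ \text{ mod }\ \mathcal{K},
\end{align*}
with $g(z)=(-\overline{\lambda u}\,z+1)^{2t-1}=G(z)$, $h(z)=(\bar u z+1)^{-2t+1}=1/H(z)$, and $\sigma$ the Kre\u{\i}n adjoint of $\varphi$. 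So the formula is already almost exactly $M_{G}C_{\sigma}M_{1/H}^{*}$; the two remaining tasks are to replace $C_{\sigma}$ by $C_{\varphi^{-1}}$ and to move $M_{G}$ and $M_{1/H}^{*}$ past the composition operator so that the multiplication operators land on the correct sides in the correct (non-adjoint) form.

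First I would check that for an automorphism the Kre\u{\i}n adjoint $\sigma$ coincides with $\varphi^{-1}$ up to a unimodular constant; in fact a direct computation with $\sigma(z)=(\bar a z-\bar c)/(-\bar b z+\bar d)=(\bar\lambda z-\bar u)/(-\overline{\lambda u}\,z+1)$ versus $\varphi^{-1}(z)=\bar\lambda(z-\lambda u)/(1-\overline{\lambda u}z)$ shows $\sigma=\varphi^{-1}$ exactly here (the normalization $d=1$ makes the constants match), so $C_{\sigma}=C_{\varphi^{-1}}$ and no correction term is needed. Next, exactly as in the proof of Corollary~\ref{C:improved_TheoremA}, I would use the identity $M_{u}C_{\psi}=C_{\psi}M_{u\circ\psi}$ (and its adjoint) to shuffle the symbols. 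The key algebraic facts to verify by direct substitution are that $G\circ\varphi^{-1}$ and $H\circ\varphi^{-1}$ are, up to harmless unimodular constants that can be absorbed, the functions that convert $M_{g}C_{\sigma}M_{1/H}^{*}$ into $M_{G}^{*}C_{\varphi^{-1}}M_{1/H}$. Concretely: starting from $M_{G}C_{\varphi^{-1}}M_{1/H}^{*}$, write $C_{\varphi^{-1}}M_{1/H}^{*}=(M_{1/H}C_{\varphi^{-1}})^{*}{}^{*}$ — better, pass $M_{1/H}^{*}$ to the other side by taking adjoints of the whole expression, or use $M_{1/H}^{*}C_{\varphi}^{*}\text{-type}$ manipulations — and similarly handle $M_{G}$. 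The cleanest route is to take the adjoint of the desired identity: $C_{\varphi}^{*}=M_{G}^{*}C_{\varphi^{-1}}M_{1/H}$ mod $\mathcal{K}$ is equivalent to $C_{\varphi}=M_{1/\bar H}^{-\text{symbol}}\cdots$; I would instead work directly, using $C_{\varphi^{-1}}M_{1/H}^{*}$ and the relation $M_{v}C_{\varphi^{-1}}=C_{\varphi^{-1}}M_{v\circ\varphi^{-1}}$ to collect everything.

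In outline the computation runs: from Theorem~\ref{T:mainTheorem}, $C_{\varphi}^{*}=M_{G}C_{\varphi^{-1}}M_{1/H}^{*}$ mod $\mathcal{K}$. Taking adjoints, $C_{\varphi}=M_{1/H}C_{\varphi}M_{G}^{*}$ is the wrong direction, so instead I multiply and rearrange on the operator side: I would show $M_{G}C_{\varphi^{-1}}=C_{\varphi^{-1}}M_{G\circ\varphi^{-1}}$ and verify the elementary identity $G\circ\varphi^{-1}=c_{1}/H$ and $1/H\circ\varphi^{-1}$-type relation $(1/H)\circ\varphi = c_2/G$ for suitable unimodular constants $c_1,c_2$ (here is where the specific forms $G(z)=(-\overline{\lambda u}z+1)^{2t-1}$, $H(z)=(\bar u z+1)^{2t-1}$ and the automorphism relation $1+\bar u\varphi^{-1}(z)=\lambda(1-\overline{\lambda u}z)/(\ldots)$ get used). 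Plugging these in collapses $M_{G}C_{\varphi^{-1}}M_{1/H}^{*}$ to the form $M_{G}^{*}C_{\varphi^{-1}}M_{1/H}$ after one further adjoint-shuffling step analogous to the chain $C_{\sigma}=C_{\varphi}^{*}M_{1/G\circ\varphi}^{*}=\cdots=M_{1/G}^{*}C_{\varphi}^{*}$ in the previous corollary. The main obstacle is purely bookkeeping: tracking the exponents $2t-1$ and the unimodular constants through the compositions $G\circ\varphi^{-1}$, $H\circ\varphi^{-1}$ and making sure every such constant is indeed unimodular (so it may be freely moved across $C_{\varphi^{-1}}$ and absorbed), since a non-unimodular constant would spoil the clean final form; there is no analytic difficulty, as boundedness of all operators involved and the compactness of the error term are already guaranteed by Theorem~\ref{T:mainTheorem} together with Lemmas~\ref{L:multipliers_A2_alpha} and~\ref{L:boundedCOs}.
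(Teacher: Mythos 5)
Your reduction to Theorem \ref{T:mainTheorem} is set up correctly: with $a=\lambda$, $b=\lambda u$, $c=\bar u$, $d=1$ one indeed gets $g=G$, $h=1/H$, and $\sigma=\varphi^{-1}$ exactly, so the theorem yields $C_{\varphi}^{*}=M_{G}C_{\varphi^{-1}}M_{1/H}^{*}$ mod $\mathcal{K}$, and the whole remaining problem is to move the adjoint from the right-hand multiplication operator to the left-hand one. But the mechanism you propose for doing this does not work. The intertwining relation $C_{\psi}M_{v}=M_{v\circ\psi}C_{\psi}$ only shuffles \emph{unstarred} multiplication operators past $C_{\psi}$, and its adjoint only shuffles starred ones past $C_{\psi}^{*}$; no amount of such shuffling converts $M_{G}C_{\varphi^{-1}}M_{1/H}^{*}$ into $M_{G}^{*}C_{\varphi^{-1}}M_{1/H}$. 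Moreover, the concrete identities you plan to verify are false: taking $\lambda=1$, $u=1/2$, $2t-1=1$, one computes $G(\varphi^{-1}(z))=(5-4z)/(2(2-z))$, which is not a constant multiple of $1/H(z)=2/(2+z)$, and likewise $(1/H)(\varphi(z))=2(z+2)/(4z+5)$ is not a constant multiple of $1/G(z)=2/(2-z)$. (The analogous identity $h_1=G\circ\varphi$ used in Corollary \ref{C:improved_TheoremA} works only because $b=0$ there.)

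The missing idea is invertibility. The paper's proof takes adjoints of $C_{\varphi}^{*}=M_{g}C_{\varphi^{-1}}M_{h}^{*}$ to get $C_{\varphi}=M_{h}C_{\varphi^{-1}}^{*}M_{g}^{*}$ mod $\mathcal{K}$, solves for $C_{\varphi^{-1}}^{*}=M_{1/h}C_{\varphi}M_{1/g}^{*}$ mod $\mathcal{K}$, and then inverts both sides, using $C_{\varphi^{-1}}^{-1}=C_{\varphi}$ (so $(C_{\varphi^{-1}}^{*})^{-1}=C_{\varphi}^{*}$) and $(M_{1/h}C_{\varphi}M_{1/g}^{*})^{-1}=M_{g}^{*}C_{\varphi^{-1}}M_{h}=M_{G}^{*}C_{\varphi^{-1}}M_{1/H}$; congruence mod $\mathcal{K}$ survives inversion because $A^{-1}-B^{-1}=-A^{-1}(A-B)B^{-1}$ when both operators are invertible. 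It is precisely this inversion step, available because $\varphi$ is an automorphism, that exchanges the starred and unstarred factors, and it has no substitute in your outline. Also, a small slip: the adjoint of $M_{G}C_{\varphi^{-1}}M_{1/H}^{*}$ is $M_{1/H}C_{\varphi^{-1}}^{*}M_{G}^{*}$, not $M_{1/H}C_{\varphi}M_{G}^{*}$ — and far from being "the wrong direction," that adjoint is the first move of the correct argument.
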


\begin{proof} It can be verified that $\sigma = \varphi^{-1}$. Theorem \ref{T:mainTheorem} gives
$$C_{\varphi}^{*} = M_{g}C_{\varphi^{-1}}M_h^{*} \ \text{ mod } \ \mathcal{K},$$
where $g(z)=(-\overline{\lambda u}z+1)^{2t-1}$ and $h(z)=(\bar{u}z+1)^{-2t+1}$. Taking adjoints gives
\begin{align*}
C_{\varphi} & = \Big(M_{g}C_{\varphi^{-1}}M^{*}_{h}\Big)^{*}\ \text{ mod } \ \mathcal{K}\\
& = M_{h}C_{\varphi^{-1}}^{*}M_{g}^{*}\ \text{ mod } \ \mathcal{K},
\end{align*}
which implies
\begin{align*}
M_{1/h}C_{\varphi}M^{*}_{1/g} = C_{\varphi^{-1}}^{*} \  \text{ mod } \ \mathcal{K}.
\end{align*}
Taking inverses then yields
\begin{align*}
C_{\varphi}^{*} = \big(C_{\varphi^{-1}}^{*}\big)^{-1} = \Big(M_{1/h}C_{\varphi}M^{*}_{1/g}\Big)^{-1} = M^{*}_{g}C_{\varphi^{-1}}M_{h} \ \text{ mod } \ \mathcal{K}.
\end{align*}
Since $g=G$ and $h=1/H$, the conclusion of the corollary follows.
\end{proof}

The space $S^2(\D)$ can be identified as $H^2(\beta)$, where the weight sequence  $\beta=\{\beta(n)\}_{n\geq 0}$ is given by $\beta(0)=1$ and $\beta(n)=n$ for all $n\geq 1$. This sequence satisfies condition \eqref{Eqn:condition_main_Theorem} with $t=1$. Consequently, Theorem A follows from Corollary \ref{C:improved_TheoremA} and Theorem B follows from Corollary \ref{C:improved_TheoremB}.

\subsection*{Acknowledgements} The authors wish to thank the referee for a careful reading and useful comments that improved the presentation of the paper.

\end{document}